\documentclass[oneside,reqno]{amsart}
\usepackage{etex}
\usepackage[a4paper]{geometry}
\usepackage{amssymb,amsfonts,amsmath}
\usepackage{comment} 
\usepackage[all,arc]{xy}
\usepackage{enumerate}
\usepackage{mathrsfs,mathtools}
\usepackage{todonotes,booktabs}
\usepackage{stmaryrd}
\usepackage{marvosym}
\usepackage{graphicx}
\usepackage{pdflscape}

\usepackage{bbm}
\usepackage{tikz}
\usepackage{tikz-cd}
\usetikzlibrary{trees}
\usetikzlibrary[shapes]
\usetikzlibrary[arrows]
\usetikzlibrary{patterns}
\usetikzlibrary{fadings}
\usetikzlibrary{backgrounds}
\usetikzlibrary{decorations.pathreplacing}
\usetikzlibrary{decorations.pathmorphing}
\usetikzlibrary{positioning}
	
\def\biblio{\bibliography{duality}\bibliographystyle{alpha}}

\usepackage{xcolor} 
\usepackage{graphicx}

\usepackage[pagebackref]{hyperref}

\definecolor{dark-red}{rgb}{0.5,0.15,0.15}
\definecolor{dark-blue}{rgb}{0.15,0.15,0.6}
\definecolor{dark-green}{rgb}{0.15,0.6,0.15}
\hypersetup{
    colorlinks, linkcolor=dark-red,
    citecolor=dark-blue, urlcolor=dark-green
}

\renewcommand*{\backref}[1]{}
\renewcommand*{\backrefalt}[4]{%
  \ifcase #1 %
No citations.
  \or
(cit. on p. #2).%
  \else
(cit on pp. #2).%
  \fi%
}
\usepackage{subfiles}

\usepackage[nameinlink,capitalise,noabbrev]{cleveref}

\newtheorem{thm}{Theorem}[section]
\newtheorem{cor}[thm]{Corollary}
\newtheorem{prop}[thm]{Proposition}
\newtheorem{lem}[thm]{Lemma}

\theoremstyle{definition}
\newtheorem{defn}[thm]{Definition}

\newtheorem{ex}[thm]{Example}

\theoremstyle{remark}
\newtheorem{rem}[thm]{Remark}

\bibliographystyle{plain}

\theoremstyle{theorem}
\newtheorem*{thm*}{Theorem}

\newtheorem*{prop*}{Proposition}
\newtheorem*{cor*}{Proposition}

\newtheorem*{thm1*}{Theorem 4.7}
\newtheorem*{cor1*}{Corollary 3.3}
\newtheorem*{prop1*}{Proposition 5.3}

\makeatletter
\let\c@equation\c@thm
\makeatother
\numberwithin{equation}{section}


\DeclareMathOperator{\Sp}{Sp}
\DeclareMathOperator{\Hom}{Hom}

\DeclareMathOperator{\cC}{\mathcal{C}}

\DeclareMathOperator{\cN}{\mathcal{N}}

\DeclareMathOperator{\Ext}{Ext}
\DeclareMathOperator{\Tor}{Tor}

\DeclareMathOperator{\Mod}{Mod}

\newcommand{\N}{\mathbb{N}}
\newcommand{\Q}{\mathbb{Q}}

\DeclareMathOperator{\cof}{cof}

\DeclareMathOperator{\Sym}{Sym}
\DeclareMathOperator{\sm}{\wedge}



\newcommand{\Z}{\mathbb{Z}}

\Crefname{figure}{Figure}{Figures}
\Crefname{assu}{Assumption}{Assumptions}
\Crefname{lem}{Lemma}{Lemmas}
\Crefname{thm}{Theorem}{Theorems}

\newcommand{\recollement}[5]{
\xymatrix{{#1} \ar[r]|-{#2} & #3 \ar[r]|-{#4} \ar@<1ex>[l]^-{{#2}_!} \ar@<-1ex>[l]_-{{#2}^*} & #5, \ar@<1ex>[l]^-{{#4}!} \ar@<-1ex>[l]_-{{#4}^*}
}}
\let\lim\relax

\DeclareMathOperator{\lim}{lim}

\newcommand{\F}{\mathbb{F}}

\newcommand{\pc}[1]{{#1}_{p}^{\wedge}}

\title{On the comparison of stable and unstable $p$-completion}
\author{Tobias Barthel}
\address{Department of Mathematical Sciences, University of Copenhagen, Universitetsparken 5, 2100 K{\o}benhavn {\O}, Denmark}
\email{tbarthel@math.ku.dk}

\author{A.~K.~Bousfield}
\address{Department of Mathematics, Statistics and Computer Sciences, University of Illinois at Chicago, 851 S.~Morgan St.~(M/C 249), Chicago, IL 60607-7045, USA}
\email{bous@uic.edu}

\date{\today}


\begin{document}

\begin{abstract}
In this note we show that a $p$-complete nilpotent space $X$ has a $p$-complete suspension spectrum if and only if its homotopy groups $\pi_*X$ are bounded $p$-torsion. In contrast, if $\pi_*X$ is not all bounded $p$-torsion, we locate uncountable rational vector spaces in the integral homology and in the stable homotopy groups of $X$. To prove this, we establish a homological criterion for $p$-completeness of connective spectra. Moreover, we illustrate our results by studying the stable homotopy groups of $K(\Z_p,n)$ via Goodwillie calculus.
\end{abstract}

\maketitle

\setcounter{tocdepth}{1}
\def\biblio{}

\section{Introduction}

The notion of $p$-completion plays a fundamental role in algebra and topology, for it provides effective means to isolate and study $p$-primary properties. Applied to homotopy theory by Bousfield and Kan~\cite{bousfield_kan} as well as Sullivan~\cite{sullivan_genetics} and developed further in~\cite{bousfield_locspaces,bousfield_locspectra}, it has since become one of the standard tools in the hands of algebraic topologists. However, there appears to be no general account of the comparison between unstable and stable $p$-completion in the literature, which is the question we address in the present note.

Our main goal is to characterize $p$-complete spaces which have $p$-complete suspension spectra:
\begin{thm1*}
If $X$ is a $p$-complete nilpotent space, then $\Sigma^{\infty}X$ is $p$-complete if and only if $\pi_nX$ is bounded $p$-torsion for each $n$. 
\end{thm1*}

In fact, we exhibit a sharp dichotomy of $p$-complete nilpotent spaces: if $X$ is a $p$-complete nilpotent space whose homotopy groups are not all bounded $p$-torsion, then the integral homology groups and stable homotopy groups of $X$ both contain an uncountable rational vector space. As a consequence, we deduce that a nilpotent space $X$ with derived $p$-complete integral homology and unstable homotopy must have both $H_n(X;\Z)$ and $\pi_nX$ of bounded $p$-torsion for all $n$.

In a first step towards the proof of the theorem, we complement the second author's characterization of $p$-complete spectra in terms of homotopy groups with an integral homological criterion, using a mild generalization of Serre classes appropriate for stable homotopy theory. This is in sharp contrast to the aforementioned fact that the integral homology of $p$-complete spaces is not well-behaved, and thus cannot be used to characterize $p$-completeness of spaces.

\begin{cor1*}
A bounded below spectrum $X$ is $p$-complete if and only if $H_*(X;\Z)$ is derived $p$-complete in each degree. 
\end{cor1*}

In order to use this result to prove the theorem, we need to detect rational classes in the homology of $p$-complete spaces whose homotopy is not bounded $p$-torsion. This rests on the study of the integral homology of $p$-complete spheres. 
We end this note with a sample computation, illustrating how Goodwillie calculus allows us to detect rational classes in the stable homotopy groups of the Eilenberg--MacLane space $K(\Z_p,n)$.

\begin{prop1*}
For $n \ge 1$ and $k> 1$, the stable homotopy group $\pi_{nk}\Sigma^{\infty}K(\Z_p,n)$ contains an uncountable rational vector space. In particular, $\Sigma^{\infty}K(\Z_p,n)$ is not $p$-complete.
\end{prop1*}

In fact, we also give a short alternative argument based on the integral homology of $K(\Z_p,n)$.

\subsection*{Conventions}

Throughout this paper, $p$ will be a fixed prime number and $\Z_p$ denotes the $p$-adic integers. We say that a nilpotent group $N$ is bounded $p$-torsion if there exists an $m$ such that for all $x \in N$, we have $x^{p^m} = 1$. A graded nilpotent group $N_*$ is said to be of bounded $p$-torsion if $N_k$ is bounded $p$-torsion for each $k$; however, we do not require a uniform bound. Whenever we are in a graded context, we indicate the degree of an abelian group $A$ by square brackets, i.e., $A[n]$ refers to $A$ placed in degree $n$. If $X$ is a topological space, then $H_*(X;A)$ is the reduced homology of $X$ with coefficients in $A$. For a space or spectrum $X$, we write $\tau_{\le n}X = \tau_{< n+1}X$ for the $n$-th Postnikov section of $X$ and $\tau_{\ge n+1}X = \tau_{>n}X$ for the fiber of the canonical map $X \to \tau_{\le n}X$.

\subsection*{Acknowledgements}

We are grateful to Peter May for suggesting the authors get in touch over this problem. Furthermore, the first author would like to thank Bj{\o}rn Dundas, Frank Gounelas, Jesper Grodal, and Thomas Nikolaus for helpful conversations about $p$-completion, and has been partially supported by the DNRF92.

\section{Preliminaries on $p$-completion}

We briefly recall the basic properties of $p$-completion for nilpotent groups, topological spaces, and spectra. With the exceptions of \Cref{lem:boundedtors} and \Cref{prop:nilboundedtors}, this material is mostly taken from \cite{bousfield_kan,bousfield_locspaces,bousfield_locspectra}, and we refer to these sources as well as \cite{hovey_morava_1999,mayponto} for further references. 

\subsection{Algebraic $p$-completion for abelian groups}

In general, the $p$-completion functor $M \mapsto \lim_iM/p^iM$ on the category of abelian groups is neither left nor right exact, so one studies its zeroth and first left derived functors $L_0$ and $L_1$, respectively. An abelian group $M$ is called derived $p$-complete (or $\Ext$-$p$-complete or $L$-complete) if the natural completion map $M \to L_0M$ is an isomorphism. For each abelian group $M$, the map $M \to L_0M$ will then be the universal homomorphism from $M$ to a derived $p$-complete abelian group by \cite[Ch.~VI, 3.2]{bousfield_kan}. We will denote the full subcategory of derived $p$-complete abelian groups by $\cC_p$. 

\begin{prop}\label{prop:pcompletemodules}
The category $\cC_p$ is a full abelian subcategory of $\Mod_{\Z}$ closed under extensions and limits. 
Furthermore, for any $M \in \Mod_{\Z}$ there is a short exact sequence
\[
\xymatrix{0 \ar[r] & \lim_i^1\Hom_{\Z}(\Z/p^i,M) \ar[r] & L_0M \ar[r] & \lim_iM/p^iM \ar[r] & 0}
\]
relating derived $p$-completion to ordinary $p$-completion.
\end{prop}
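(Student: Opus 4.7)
The plan is to reduce everything to long exact sequence manipulations via the reformulation that $M$ lies in $\cC_p$ if and only if $\Hom_\Z(\Z[1/p], M) = 0 = \Ext^1_\Z(\Z[1/p], M)$, together with the identification $L_0 M \cong \Ext^1_\Z(\Z[1/p]/\Z, M)$. The key simplification is that $\Z[1/p]$ is flat but not projective over $\Z$, so it has projective dimension one and $\Ext^i_\Z(\Z[1/p], -)$ vanishes for $i \ge 2$; this keeps every long exact sequence below finite.

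For the exact sequence describing $L_0 M$, I would apply $\Hom_\Z(-, M)$ to $0 \to \Z \to \Z[1/p] \to \Z[1/p]/\Z \to 0$ and use the above reformulation to identify $L_0 M$ with $\Ext^1_\Z(\Z[1/p]/\Z, M)$. Since $\Z[1/p]/\Z \cong \colim_i \Z/p^i$, the Milnor exact sequence for $\Ext^1$ of a sequential colimit yields
\[
0 \to \lim_i^1 \Hom_\Z(\Z/p^i, M) \to \Ext^1_\Z(\Z[1/p]/\Z, M) \to \lim_i \Ext^1_\Z(\Z/p^i, M) \to 0,
\]
and the resolution $0 \to \Z \xrightarrow{p^i} \Z \to \Z/p^i \to 0$ identifies $\Ext^1_\Z(\Z/p^i, M) \cong M/p^iM$, producing the claimed short exact sequence.

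For the properties of $\cC_p$, I would argue uniformly via the long exact sequence of $\Ext^*_\Z(\Z[1/p], -)$ applied to a short exact sequence $0 \to M' \to M \to M'' \to 0$; thanks to the vanishing of $\Ext^{\ge 2}_\Z(\Z[1/p], -)$, this collapses to a six-term sequence. Closure under extensions is then a direct two-out-of-three observation on this sequence. For the abelian subcategory claim, given a morphism $f \colon M \to N$ in $\cC_p$, I would apply the long exact sequence in turn to $0 \to \ker f \to M \to \im f \to 0$ and $0 \to \im f \to N \to \operatorname{coker} f \to 0$; the vanishing of $\Hom(\Z[1/p], -)$ and $\Ext^1(\Z[1/p], -)$ on $M$ and $N$ propagates to force $\ker f$, $\im f$, and $\operatorname{coker} f$ all to lie in $\cC_p$. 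Closure under limits follows since $\Hom$ and $\Ext^1$ commute with products in the second variable, which combined with the closure under kernels just established handles arbitrary small limits via the equalizer-of-products description.

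The main obstacle is conceptual rather than computational: extracting the Ext-vanishing characterization of $\cC_p$ from the definition $M \cong L_0M$ requires care because $L_0$ itself is not exact, and only becomes well-behaved after one knows it lands in this Ext-vanishing subcategory. Once that reformulation is in hand, the six-term sequence handles every closure property uniformly, and the short exact sequence in the statement is essentially just the Milnor sequence applied to $\Ext^1_\Z(\Z[1/p]/\Z, M)$.
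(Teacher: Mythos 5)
Your argument is correct, but it is worth noting that the paper does not actually prove this proposition: it simply cites \cite[Ch.~VI, 2.1]{bousfield_kan} and \cite[Thms.~A.2 and A.6]{hovey_morava_1999}. What you have written is, in effect, a reconstruction of the proof contained in those references, and it goes through: the six-term exact sequence for $\Ext^*_{\Z}(\Z[1/p],-)$ (which terminates because every abelian group has projective dimension at most one over $\Z$ --- note that this is what gives $\Ext^{\ge 2}_{\Z}(\Z[1/p],-)=0$, not flatness, which is irrelevant here) does handle extensions, kernels, cokernels, and images exactly as you describe, products are handled by the compatibility of $\Hom$ and $\Ext^1$ with products in the second variable, and the displayed short exact sequence is precisely the Milnor $\lim^1$-sequence for $\Ext^1_{\Z}(\colim_i\Z/p^i,M)$ combined with $\Ext^1_{\Z}(\Z/p^i,M)\cong M/p^iM$. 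The two genuine inputs you are taking as known --- that $M\in\cC_p$ if and only if $\Hom_{\Z}(\Z[1/p],M)=0=\Ext^1_{\Z}(\Z[1/p],M)$, and that the zeroth left derived functor of $p$-completion is computed by $\Ext^1_{\Z}(\Z[1/p]/\Z,M)$ --- are exactly the content of \cite[Thm.~A.2]{hovey_morava_1999}; you correctly identify this as the one place where real care is needed, since the paper's definition of $L_0$ is as a derived functor of the (non-exact) completion functor rather than as an $\Ext$ group. With that identification granted, your proof is complete and, if anything, more self-contained than the paper's.
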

\begin{proof}
This is essentially proven in \cite[Ch.~VI, 2.1]{bousfield_kan}, but can also be deduced as a special case of \cite[Thms.~A.2 and A.6]{hovey_morava_1999}.
\end{proof}

We will later make use of the following observation. 
\begin{lem}\label{lem:boundedtors}
If $A \in \cC_p$ is torsion, then $A$ is bounded $p$-torsion.
\end{lem}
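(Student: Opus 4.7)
The plan is to argue by contradiction: suppose $A \in \cC_p$ is torsion but not bounded $p$-torsion, and produce an element of $A$ that is not $p$-power torsion. (Torsion for $A \in \cC_p$ automatically means $p$-power torsion, since multiplication by any prime $q \neq p$ is invertible on derived $p$-complete groups.) The central tool is the short exact sequence of \Cref{prop:pcompletemodules} applied to $A = L_0 A$:
\[
0 \longrightarrow \lim\nolimits^1_i A[p^i] \longrightarrow A \longrightarrow \lim\nolimits_i A/p^iA \longrightarrow 0.
\]

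As a preliminary step, I would show $\bigcap_i p^iA = 0$. Set $B := \bigcap_i p^iA$, which equals $\lim_i p^iA$ with the inclusions as transitions. Each $p^iA$ (the image of multiplication by $p^i$ on $A$) lies in $\cC_p$, and so does $B$, since $\cC_p$ is an abelian subcategory closed under limits by \Cref{prop:pcompletemodules}. Applying $\lim_i$ to the short exact sequences $0 \to (p^iA)[p] \to p^iA \xrightarrow{p} p^{i+1}A \to 0$, and using that $\lim\nolimits^1_i (p^iA)[p] = 0$ since the tower has monomorphic transitions, one finds that $p \colon B \to B$ is surjective. Thus $B/pB = 0$, so $B = 0$ by derived Nakayama (valid in $\cC_p$). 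Combined with the displayed sequence above, this gives $\lim\nolimits^1_i A[p^i] = 0$ and an isomorphism $A \xrightarrow{\sim} \lim_i A/p^iA$.

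Under this identification, a compatible system $(\bar a_i) \in \lim_i A/p^iA$ represents a $p^m$-torsion element of $A$ precisely when $\mathrm{ord}(\bar a_i)$ divides $p^m$ for every $i$, so it suffices to exhibit a compatible system with $\mathrm{ord}(\bar a_i) \to \infty$. A direct analysis shows that given $\bar a_i$ of order $p^{k_i}$ with representative $\hat a \in A$, the lift $\bar a_{i+1}$ has order strictly larger than $p^{k_i}$ exactly when $\hat a \notin p^{\,i+1-k_i}A + A[p^{k_i}]$. The decisive observation is that for $A$ unbounded $p$-torsion the subgroup $p^jA + A[p^k]$ is proper in $A$ for every $j, k \geq 1$: otherwise one checks $p^kA = p^{k+j}A$, making $p^kA$ a $p$-divisible subobject in $\cC_p$, hence zero, which would force $A$ to be bounded. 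Choosing representatives inductively to land outside these proper subgroups then produces a compatible system with $k_i \to \infty$, yielding the desired contradiction. The main obstacle is precisely this inductive construction: one must simultaneously maintain the compatibility $\bar a_{i+1} \mapsto \bar a_i$ and keep the chosen lifts outside the moving avoidance subgroups $p^{\,i+1-k_i}A + A[p^{k_i}]$, which requires careful calibration of the growth rate of $k_i$ (possibly sublinear in $i$) together with coherent choices of successive lifts.
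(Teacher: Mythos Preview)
Your overall strategy---produce a non-torsion compatible system in $\lim_i A/p^iA$---matches the paper's elementary argument, but there are two problems. First, the preliminary step contains a false claim: towers with \emph{monomorphic} transition maps need not have vanishing $\lim^1$ (e.g.\ the tower $p^i\Z \subseteq \Z$ has $\lim^1 \cong \Z_p/\Z$); it is the Mittag--Leffler condition, in particular epimorphic transitions, that guarantees this. Fortunately this step is unnecessary: you only need the surjection $A = L_0A \twoheadrightarrow \lim_i A/p^iA$ from \Cref{prop:pcompletemodules}, since a non-torsion element in the target already contradicts $A$ being torsion.

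Second, and more seriously, you stop exactly at the crux. You correctly observe that $p^jA + A[p^k] \subsetneq A$ for all $j,k \ge 1$, but the ``inductive construction'' is never carried out, and there is a real obstruction: once $\bar a_i$ is fixed, whether $k_{i+1} > k_i$ is already determined (for $k_i \ge 1$ the choice of lift is irrelevant, as $p^{k_i}(\hat a + p^i c) \equiv p^{k_i}\hat a \pmod{p^{i+1}A}$), so one cannot simply ``choose a lift outside the avoidance subgroup'' at stage $i+1$. The calibration you allude to is therefore a genuine global problem, not a local one. The paper bypasses all of this with an explicit formula: pick $a_i \in A$ of order $p^i$ and set $x_j = \sum_{i=0}^{j-1} p^i a_{2i+1}$; then $(x_j \bmod p^jA)_j$ is compatible and not $p$-power torsion. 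The paper also records a one-line conceptual proof: derived $p$-complete groups are cotorsion, so by the Baer--Fomin theorem a torsion such group is (divisible) $\oplus$ (bounded), and the divisible summand vanishes because derived $p$-complete groups are reduced.
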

\begin{proof}
We give two proofs, a conceptual one and an elementary argument. First, any derived $p$-complete group $A$ is cotorsion, so the Baer--Fomin theorem~\cite{baer_boundedtorsion} implies that $A$ is the direct sum of a divisible group and a bounded torsion group. Since $A$ is derived $p$-complete, it must be reduced, hence the divisible summand is trivial. 

Second, suppose that the conclusion of the lemma is false, i.e., that there exists a sequence $(a_i)_{i\in \N}$ of elements of $A$ such that the order of $a_i$ is $p^i$. Set $x_j= \sum_{i=0}^{j-1}a_{2i+1}p^i$, then the element $x = (x_1,x_2,x_3,\ldots)\in\prod_{j\in \N}A$ lies in $\lim_{j}A/p^j$. By construction, $x$ is not $p$-torsion, which contradicts the fact that $A \to \lim_{j}A/p^j$ is surjective, forcing $\lim_{j}A/p^j$ to be $p$-torsion.
\end{proof}

\begin{rem}
By a theorem of Pr{\"u}fer, the conclusion of the lemma implies that $A$ must in fact be a direct sum of cyclic $p$-groups. 
\end{rem}

\subsection{Algebraic $p$-completion for nilpotent groups}

Recall from \cite[Ch.~VI, \S2]{bousfield_kan} that the notion of derived $p$-completion can be extended to nilpotent groups, as follows: If $\pc{X}$ denotes the Bousfield--Kan $p$-completion of a nilpotent space $X$ as recalled in the next subsection, then we define the derived $p$-completion of the nilpotent group $N$ as $L_0N = \pi_1(\pc{K(N,1)})$ and $L_1N = \pi_2(\pc{K(N,1)})$. A nilpotent group $N$ is called derived $p$-complete if the completion map $N \to L_0N$ is an isomorphism; for each nilpotent group $N$, the map $N \to L_0N$ will then be the universal homomorphism from $N$ to a derived $p$-complete nilpotent group by \cite[Ch.~VI, 3.2]{bousfield_kan}. We denote the category of derived $p$-complete nilpotent groups by $\cN_p$. 

The inclusion functor $\cC_p \to \cN_p$ has a left adjoint given by taking a derived $p$-complete nilpotent group $N$ to the derived $p$-completion of its abelianization $L_0(N/[N,N])$. Note that the unit of this adjunction is surjective, i.e., for any derived $p$-complete nilpotent group $N$, the canonical map $N \to L_0(N/[N,N])$ is surjective. Indeed, since $L_0$ preserves epimorphisms of nilpotent groups, all maps in the following commutative diagram are surjective:
\[
\xymatrix{N \ar[r] \ar[d]_{\cong} & N/[N,N] \ar[d] \\
L_0N \ar[r] & L_0(N/[N,N]).}
\]
We obtain the following generalization of \Cref{lem:boundedtors}:

\begin{prop}\label{prop:nilboundedtors}
The following conditions are equivalent for $N \in \cN_p$:
	\begin{enumerate}
		\item $N$ is torsion.
		\item $L_0(N/[N,N])$ is torsion.
		\item $N$ is bounded $p$-torsion. 
	\end{enumerate}
\end{prop}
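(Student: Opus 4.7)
The implications $(3) \Rightarrow (1)$ and $(1) \Rightarrow (2)$ are immediate: the former is definitional, while the latter uses the surjectivity of the unit $N \twoheadrightarrow L_0(N/[N,N])$ recorded just before the proposition, since a homomorphic image of a torsion group is torsion.

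For the substantive implication $(2) \Rightarrow (3)$, I would first apply \Cref{lem:boundedtors} to upgrade the hypothesis: the torsion group $L_0(N/[N,N]) \in \cC_p$ is in fact bounded $p$-torsion, of some exponent $p^m$. The plan is then to induct on the length of a central filtration realizing $N \in \cN_p$, using the structural characterization from \cite[Ch.~VI]{bousfield_kan} that any $N \in \cN_p$ admits a finite filtration $1 = N_s \trianglelefteq \cdots \trianglelefteq N_0 = N$ by subgroups normal in $N$ with each quotient $N_i/N_{i+1}$ central in $N/N_{i+1}$ and lying in $\cC_p$. The base case $s = 1$ is immediate, since then $N$ is abelian in $\cC_p$, so $N \cong L_0 N = L_0(N/[N,N])$ is bounded $p$-torsion directly from the hypothesis. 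For the inductive step, set $A = N_{s-1} \in \cC_p$ and $Q = N/A \in \cN_p$, yielding a central extension $1 \to A \to N \to Q \to 1$ in which $Q$ has strictly shorter filtration length.

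Since $Q^{ab}$ is a quotient of $N^{ab}$ and $L_0$ preserves surjections of abelian groups (by right-exactness at the $L_0$-term in the long exact sequence of derived functors of $p$-adic completion), $L_0(Q^{ab})$ inherits bounded $p$-torsion from $L_0(N/[N,N])$. By induction, $Q$ has exponent dividing some $p^a$. Granting moreover that $A$ has exponent $p^b$, every $x \in N$ then satisfies $x^{p^a} \in A$ and hence $x^{p^{a+b}} = 1$, yielding the uniform bound. The main obstacle is precisely this last bound on the central subgroup $A \in \cC_p$: by \Cref{lem:boundedtors} it suffices to show that $A$ is torsion. The natural handle is the composite $A \hookrightarrow N \twoheadrightarrow L_0(N/[N,N])$, whose image lies in the bounded $p$-torsion group $L_0(N/[N,N])$ and so is automatically bounded $p$-torsion; tracking the kernel of this composite through the filtration, perhaps by choosing the central series so that $A$ itself arises as a quotient of a bounded $p$-torsion $\cC_p$-layer, should yield the desired torsion property of $A$. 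This is the step where the precise structural input from \cite{bousfield_kan} on derived $p$-complete nilpotent groups is indispensable.
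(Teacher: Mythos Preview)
Your proof of the easy implications $(3)\Rightarrow(1)$ and $(1)\Rightarrow(2)$ matches the paper exactly. The gap you yourself flag in $(2)\Rightarrow(3)$ is genuine and not closed by the sketch you give. Working down a central filtration with $\cC_p$ quotients (essentially the upper central series), the inductive step reduces to showing that the bottom central subgroup $A\in\cC_p$ is torsion; but knowing only that $L_0(N^{\mathrm{ab}})$ is bounded $p$-torsion gives you no direct grip on $A$. The composite $A\hookrightarrow N\twoheadrightarrow L_0(N^{\mathrm{ab}})$ controls only the image, and its kernel contains $A\cap[N,N]$ together with whatever part of $A$ maps into the $p$-divisible kernel of $N^{\mathrm{ab}}\to L_0(N^{\mathrm{ab}})$; nothing in your setup bounds that. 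Your suggestion of ``choosing the central series so that $A$ itself arises as a quotient of a bounded $p$-torsion $\cC_p$-layer'' is circular: such a choice presupposes exactly the control over the lower layers that you are trying to establish.

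The paper avoids this by running the argument through the \emph{lower} central series instead. The crucial structural fact is that the associated graded $\bigoplus_{i\ge 1}Q_i(N)$ is generated as a Lie algebra by $Q_1(N)=N^{\mathrm{ab}}$; once one knows (via \cite[Ch.~VI, 3.7]{bousfield_kan}) that $Q_1(N)$ is a direct sum of a $p$-divisible group and a bounded $p$-torsion group, iterated bracketing forces every $Q_i(N)$ to have the same form, so $L_0Q_i(N)$ is bounded $p$-torsion for all $i$. One then climbs back up using the exact sequences $L_0Q_i(N)\to L_0(N/\gamma_{i+1}N)\to L_0(N/\gamma_iN)\to 1$. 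In short, the abelianization controls the lower central series quotients from the top down (through the Lie bracket), but gives no leverage on the bottom term of an upper-type central series; the paper's own \Cref{prop:nilboundedtors} is followed by a remark making precisely this point, noting that the upper central series route handles $(1)\Rightarrow(3)$ but is inadequate for the application, which really needs $(2)\Rightarrow(3)$.
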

\begin{proof}
The surjectivity of the map $N \to L_0(N/[N,N])$ observed above immediately gives the implication (1) $\Rightarrow$ (2), while (3) $\Rightarrow$ (1) is trivial.  

Assume that $L_0(N/[N,N])$ is torsion and thus bounded $p$-torsion by \Cref{lem:boundedtors}. Consider the lower central series of $N$,
\[
N = \gamma_1N \supseteq \gamma_2N \supseteq \ldots \supseteq \gamma_mN = 1,
\]
with successive abelian quotients $Q_i(N) = \gamma_iN/\gamma_{i+1}N$. We claim that, for each $i\ge 1$, $Q_i(N)$ is a direct sum of a $p$-divisible group and a bounded $p$-torsion group. Indeed, we start with the abelianization $Q_1(N) = N/[N,N]$ of $N$. Lemma 3.7 in \cite[Ch.~VI]{bousfield_kan} implies that the kernel of the completion map $Q_1(N) \to L_0Q_1(N)$ is $p$-divisible, so the claim holds for $Q_1(N)$. The general case follows from this, because $\bigoplus_{i\ge 1}Q_i(N)$ is generated as a Lie algebra by $Q_1(N)$. By \cite[Ch.~VI, 2.5]{bousfield_kan}, there is an exact sequence
\[
\xymatrix{L_0Q_i(N) \ar[r] & L_0(N/\gamma_{i+1}N) \ar[r] & L_0(N/\gamma_{i}N) \ar[r] & 1}
\]
for any $i \ge 1$. Using the previous claim, $L_0Q_i(N)$ is bounded $p$-torsion, so we see inductively that $L_0(N/\gamma_iN)$ is bounded $p$-torsion for all $i\ge 1$, hence (3) holds. 
\end{proof}

\begin{rem}
The implication (1) $\Rightarrow$ (3) in the previous proposition could also be proven more directly via the upper central series of $N$, whose quotients are known to be derived $p$-complete by \cite[VI.~3.4(ii)]{bousfield_kan}, but this result would be insufficient for our later use. 
\end{rem}

\subsection{Topological $p$-completion}

In \cite{bousfield_kan}, Bousfield and Kan introduced the notion of $p$-completion for topological spaces, lifting the algebraic notion defined above to topology. In general, the $p$-completion of a space is difficult to describe, but the theory simplifies significantly for nilpotent spaces; in particular, in this case $p$-completion coincides with $H\F_p$-localization \cite{bousfield_locspaces}. Furthermore, for nilpotent spaces with $\F_p$-homology of finite type, $p$-completion can be identified with $p$-profinite completion due to Sullivan~\cite{sullivan_genetics}. Similarly, the category of spectra admits (at least) two notions of $p$-completion, given either by $H\F_p$-localization or, the one we will use here, localization at the mod $p$ Moore spectrum $S^0/p$, see~\cite{bousfield_locspectra}. The next result summarizes the relation between these constructions and lists their basic properties. 

\begin{thm}[Bousfield, Kan]\label{thm:bkpcompletion}\
\begin{enumerate}
	\item A nilpotent space $X$ is $p$-complete if and only if $\pi_nX$ is derived $p$-complete for all $n \in \N$. Moreover, the notions of $p$-completion and $H\F_p$-localization coincide for nilpotent spaces.
	\item A spectrum $X$ is $p$-complete if and only if $\pi_nX$ is derived $p$-complete for all $n \in \Z$. If $X$ is bounded below, then $X$ is $p$-complete if and only if $X$ is $H\F_p$-local.
\end{enumerate}
Moreover, if $X$ is a nilpotent space or spectrum, then there exists a short exact sequence computing the unstable or stable homotopy groups of its $p$-completion, respectively:
\[
\xymatrix{0 \ar[r] & L_0\pi_nX \ar[r] & \pi_n(\pc{X}) \ar[r] & L_1\pi_{n-1}X \ar[r] & 0}
\]
for any $n$, where $L_i(-) \cong \Ext_{\Z}^{1-i}(\Z/p^{\infty},-)$ are the derived functors of $p$-completion.
\end{thm}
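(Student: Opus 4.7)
Set $X:=\Sigma^n H\Z_p$, so that $\Omega^{\infty}X\simeq K(\Z_p,n)$ and hence $\Sigma^{\infty}K(\Z_p,n)\simeq\Sigma^{\infty}\Omega^{\infty}X$. The plan is to apply Goodwillie calculus to $\Sigma^{\infty}\Omega^{\infty}\colon\Sp\to\Sp$ at $X$ and to locate the desired rational subspace inside the $k$-th layer $D_k(X)\simeq (X^{\wedge k})_{h\Sigma_k}$. Since $X$ is $(n-1)$-connective with $n\geq 1$, the tower $\{P_j(X)\}$ converges to $\Sigma^{\infty}\Omega^{\infty}X$; moreover, the layer $D_j(X)$ is $(nj-1)$-connective, so layers with $j>k$ contribute nothing to $\pi_{nk}$, and layers with $1\leq j<k$ contribute only through positive-degree group homology $H_{>0}(\Sigma_j;\Z_p^{\otimes j}\otimes\mathrm{sgn}^n)$---these are annihilated by $j!$ and, since $\Z_p^{\otimes j}$ is uniquely divisible at every prime $q\neq p$, are bounded $p$-torsion.

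Using that $\Z_p$ is $\Z$-flat, one identifies $(H\Z_p)^{\wedge k}\simeq H(\Z_p^{\otimes_\Z k})$, whence $D_k(X)\simeq (\Sigma^{nk}H(\Z_p^{\otimes k}))_{h\Sigma_k}$ with $\Sigma_k$ permuting the tensor factors and twisting the suspension coordinates by $\mathrm{sgn}^n$. The homotopy orbit spectral sequence collapses to
\[
\pi_{nk}D_k(X)\cong\begin{cases}\Sym^k_{\Z}\Z_p,&n\text{ even},\\ \Lambda^k_{\Z}\Z_p,&n\text{ odd.}\end{cases}
\]
The heart of the proof is to exhibit an uncountable $\Q$-vector space in this group. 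For $n$ odd, the identity $\Lambda^k_{\F_p}\F_p=0$ for $k\geq 2$ gives $\Lambda^k_{\Z}\Z_p\otimes\F_p=0$; combined with the $q$-divisibility for $q\neq p$ inherited from the $\Z_p$-module structure, this shows $\Lambda^k_{\Z}\Z_p$ is divisible and thus contains a $\Q$-vector space summand of dimension $\dim_{\Q}\Lambda^k\Q_p=2^{\aleph_0}$. For $n$ even, I would construct explicit $p^{\infty}$-divisible elements: taking $k=2$, the element $\epsilon(a,b):=a\otimes b-ab\otimes 1\in\Sym^2_{\Z}\Z_p$ satisfies $\epsilon(a,b)=p^{2m}\epsilon(a',b')$ whenever $a=a_m+p^ma'$ and $b=b_m+p^mb'$ with $a_m,b_m\in\Z$, because the integer contributions cancel identically. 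The obvious generalizations cover $k>2$, and the resulting $\Q$-span rationalizes onto the uncountable-dimensional kernel of the multiplication map $\Sym^k\Q_p\to\Q_p$.

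To pass to the total stable homotopy, observe that any homomorphism from bounded $p$-torsion to a $\Q$-vector space vanishes, so the fiber sequences $D_k(X)\to P_k(X)\to P_{k-1}(X)$ show that the uncountable $\Q$-subspace of $\pi_{nk}D_k(X)$ injects into $\pi_{nk}P_k(X)\cong\pi_{nk}\Sigma^{\infty}K(\Z_p,n)$. The second assertion of the proposition then follows from \Cref{thm:bkpcompletion}(2), because a group containing a nontrivial $\Q$-vector space is never derived $p$-complete ($L_0\Q=0$ forces such a subspace to be killed by the completion map, contradicting injectivity on the embedded $\Q$-subspace). I expect the main obstacle to be producing the divisible elements in the symmetric case and verifying that their $\Q$-span is genuinely uncountable-dimensional rather than merely of uncountable $\Q$-rank; the exterior case is comparatively automatic from the vanishing of $\Lambda^k_{\F_p}\F_p$.
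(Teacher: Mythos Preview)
Your proposal does not address the stated theorem at all. \Cref{thm:bkpcompletion} is the Bousfield--Kan characterization of $p$-completeness of nilpotent spaces and spectra in terms of derived $p$-completeness of homotopy groups, together with the short exact sequence relating $\pi_*(\pc{X})$ to $L_0\pi_*X$ and $L_1\pi_{*-1}X$. The paper gives no proof of this theorem; it is quoted from \cite{bousfield_kan,bousfield_locspaces,bousfield_locspectra}. What you have written is instead an argument for \Cref{prop:emspaces}, the detection of uncountable rational vector spaces in $\pi_{nk}\Sigma^{\infty}K(\Z_p,n)$. Nothing in your write-up establishes the equivalence ``$X$ is $p$-complete $\Leftrightarrow$ $\pi_nX$ is derived $p$-complete for all $n$'' or the exact sequence; you merely invoke part~(2) of the theorem at the very end.

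Viewed as a proof of \Cref{prop:emspaces}, your argument is essentially the paper's second proof via the Goodwillie tower of $\Sigma^{\infty}\Omega^{\infty}$, with a couple of inaccuracies. First, the identification $(H\Z_p)^{\wedge k}\simeq H(\Z_p^{\otimes_\Z k})$ is false: flatness of $\Z_p$ controls $\pi_{0}$ but not the higher homotopy, and $\pi_{*}(H\Z)^{\wedge k}$ is nontrivial (finite) torsion in positive degrees coming from the integral dual Steenrod algebra. The paper accounts for this correctly in \eqref{eq:c1}. Your conclusion about $\pi_{nk}D_k(X)$ is nonetheless right, since only the bottom homotopy group enters there. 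Second, when you bound the contribution of $D_j$ for $j<k$ to $\pi_{nk}$, you only name the $H_{>0}(\Sigma_j;\Z_p^{\otimes j}\otimes\mathrm{sgn}^n)$ terms; there are also finite contributions from $H_s(\Sigma_j;\pi_t(X^{\wedge j}))$ with $t>nj$, which the paper handles in \eqref{eq:c2}--\eqref{eq:c3}. Your treatment of the algebraic input (that $\Lambda^k_{\Z}\Z_p$ and $\ker(\Sym^k_{\Z}\Z_p\to\Z_p)$ are uncountable $\Q$-vector spaces) is a hands-on variant of the paper's \Cref{lem:surprise}; both are valid.
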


\section{Generalized Serre theory}

The full subcategory  $\cC_p$ of $\Mod_{\Z}$ is not closed under subobjects or quotients, and thus does not form a Serre class in the usual sense. This necessitates a mild generalization of Serre's mod $\cC$ theory which we develop in this section. 

\begin{defn}
A weak Serre class is a full subcategory $\cC \subseteq \Mod_{\Z}$ such that if 
\[
\xymatrix{A_1 \ar[r] & A_2 \ar[r] & A_3 \ar[r] & A_4 \ar[r] & A_5}
\]
is an exact sequence in $\Mod_{\Z}$ with $A_1,A_2,A_4,A_5 \in \cC$, then also $A_3 \in \cC$. 
\end{defn}

More explicitly, this means that $\cC \subseteq \Mod_{\Z}$ is a full additive subcategory closed under kernels, cokernels, and extensions. It follows that $\cC$ is also closed under tensoring and $\Tor_1^{\Z}$ with respect to finitely generated abelian groups. For instance, any Serre subcategory of $\Mod_{\Z}$ is a weak Serre class, but the converse does not hold. The main example of interest to us here is the category $\cC_p$ of derived $p$-complete abelian groups, see \Cref{prop:pcompletemodules}. 

\begin{prop}\label{prop:genserretheory}
Suppose $\cC$ is a weak Serre class. If $X$ is a bounded below spectrum, then the following two conditions are equivalent:
	\begin{enumerate}
		\item $\pi_nX \in \cC$ for all $n \in \Z$.
		\item $H_n(X;\Z) \in \cC$ for all $n \in \Z$. 
	\end{enumerate}
\end{prop}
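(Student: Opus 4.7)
The plan is to prove both implications by induction along the Postnikov tower of $X$, exploiting the fact that each Postnikov fiber is a shifted Eilenberg--MacLane spectrum whose integral homology is concentrated in a single degree. Concretely, since $H\Z$ is the unit of the smash product, for any abelian group $A$ and any integer $k$ one has $H_n(HA[k];\Z) = A$ if $n = k$ and $0$ otherwise; hence as soon as $A \in \cC$, every integral homology group of $HA[k]$ lies in $\cC$.

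For the direction (1) $\Rightarrow$ (2), assume $X$ is $(N{-}1)$-connective with all $\pi_k X \in \cC$. I claim by induction on $k \ge N - 1$ that $H_n(\tau_{\le k} X;\Z) \in \cC$ for every $n$. The base case $\tau_{\le N-1} X = 0$ is immediate. For the inductive step, the fiber sequence $H\pi_k X[k] \to \tau_{\le k} X \to \tau_{\le k-1} X$ yields the five consecutive terms
\[
H_{n+1}(\tau_{\le k-1} X;\Z) \to H_n(H\pi_k X[k];\Z) \to H_n(\tau_{\le k} X;\Z) \to H_n(\tau_{\le k-1} X;\Z) \to H_{n-1}(H\pi_k X[k];\Z)
\]
whose four outer entries lie in $\cC$, so the weak Serre class property forces the middle term into $\cC$. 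To pass from the tower to $X$, observe that $\tau_{\ge k+1} X$ is $k$-connective, so $H_n(X;\Z) \cong H_n(\tau_{\le k} X;\Z)$ for any $n \le k$, giving (2).

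For (2) $\Rightarrow$ (1), fix a target degree $m$ and argue by induction on $m - N$, where $X$ is $(N{-}1)$-connective. If $N > m$ then $\pi_m X = 0 \in \cC$. Otherwise, the Hurewicz theorem for bounded below spectra gives $\pi_N X \cong H_N(X;\Z) \in \cC$, and the fiber sequence $\tau_{\ge N+1} X \to X \to H\pi_N X[N]$ produces the five-term exact sequence
\[
H_{n+1}(X;\Z) \to H_{n+1}(H\pi_N X[N];\Z) \to H_n(\tau_{\ge N+1} X;\Z) \to H_n(X;\Z) \to H_n(H\pi_N X[N];\Z),
\]
whose four outer terms again lie in $\cC$, so $H_n(\tau_{\ge N+1} X;\Z) \in \cC$ for every $n$. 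Apply the inductive hypothesis to the $N$-connective spectrum $\tau_{\ge N+1} X$ to conclude $\pi_m X = \pi_m(\tau_{\ge N+1} X) \in \cC$.

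The only genuinely delicate point is bookkeeping with the weak Serre class: since $\cC$ is not closed under arbitrary subobjects or quotients, one cannot simply read off membership from long exact sequences term by term, but must package them into five-term segments with the desired group as the \emph{middle} entry. Once this is set up, the rest is a routine Postnikov induction, a direct adaptation of the classical mod-$\cC$ Serre argument.
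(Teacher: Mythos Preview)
Your argument contains a genuine error at its very first step: the claim that ``$H\Z$ is the unit of the smash product'' is false. The monoidal unit in spectra is the sphere spectrum $S^0$, not $H\Z$, and consequently $H_n(HA[k];\Z)=\pi_n(HA[k]\wedge H\Z)$ is \emph{not} simply $A$ concentrated in degree $k$. For example, $H_*(H\Z;\Z)$ is the integral (dual) Steenrod algebra, which contains nontrivial torsion in positive degrees. You invoke this false computation in both directions, so both arguments collapse as written.

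The Postnikov-tower scaffolding you set up for $(1)\Rightarrow(2)$ is essentially what the paper does; the missing ingredient is precisely the lemma that $H_*(HA;\Z)\in\cC$ whenever $A\in\cC$, which is not a triviality. The paper supplies it via the universal coefficient theorem: since $H_*(HA)\cong H_*(H\Z;A)$, one obtains a short exact sequence with outer terms $H_*(H\Z)\otimes_{\Z}A$ and $\Tor_1^{\Z}(H_{*-1}(H\Z),A)$, and these lie in $\cC$ because $H_*(H\Z)$ is degreewise finitely generated over $\Z$ (itself a consequence of classical Serre theory for the class of finitely generated abelian groups) and $\cC$ is closed under tensor and $\Tor_1$ against finitely generated abelian groups. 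With this lemma in hand, your five-term-sequence induction for $(1)\Rightarrow(2)$ goes through. For $(2)\Rightarrow(1)$ the paper uses the Atiyah--Hirzebruch spectral sequence (again exploiting that $\pi_*S^0$ is degreewise finitely generated); your inductive approach via $\tau_{\ge N+1}X$ would also work, but only once the same lemma about $H_*(HA;\Z)$ is available to handle the outer terms of your five-term sequence.
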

\begin{proof}
Assume the first condition holds; we will argue via the Postnikov tower $(\tau_{\le n}X)$ of $X$. For simplicity, we will write $H_*(Y)$ for the integral homology of a spectrum $Y$ throughout this proof.

To start with, we need to show that $H_*(HA) \in \cC$ for $A \in \cC$. Using the isomorphisms $H_*(HA) \cong H_*(H\Z;A)$, the universal coefficient theorem gives a short exact sequence
\[
\xymatrix{0 \ar[r] & H_*(H\Z) \otimes_{\Z} A \ar[r] & H_*(HA) \ar[r] & \Tor_1^{\Z}(H_{*-1}(H\Z),A) \ar[r] & 0.}
\]
In each degree, the integral Steenrod algebra $H_*(H\Z)$ is finitely generated over $\Z$, as follows from Serre theory for the class of finitely generated abelian groups. Therefore, the outer terms of this sequence are in $\cC$. This shows $H_*(HA) \in \cC$ as well. 

Given $n\in\Z$, we will now prove that $H_n(X) \in \cC$. Since $H_n(\tau_{>n}X) = 0 = H_{n-1}(\tau_{>n}X)$ by connectivity, we see that $H_n(X) \cong H_n(\tau_{\le n}X)$. This reduces the claim to proving that $H_*(\tau_{\le n}X) \in \cC$. This follows inductively, using the exact sequence
\[
\resizebox{\textwidth}{!}{
\xymatrix{H_{*+1}(\tau_{\le n-1}X) \ar[r] & H_{*}(\Sigma^nH\pi_nX) \ar[r] & H_*(\tau_{\le n}X) \ar[r] & H_*(\tau_{\le n-1}X) \ar[r] & H_{*-1}(\Sigma^nH\pi_nX)}}
\]
associated to the fiber sequence $\Sigma^nH(\pi_nX) \to \tau_{\le n}X \to \tau_{\le n-1}X$. Since $H_k(H\pi_nX) \in \cC$ for all $k\in\Z$, this gives the implication $(1) \Rightarrow (2)$. 

For the converse, consider the convergent Atiyah--Hirzebruch spectral sequence
\[
E_{s,t}^2 \cong H_s(X;\pi_tS^0) \implies \pi_{s+t}X.
\]
Since $\pi_tS^0$ is finitely generated over $\Z$ for each $t\in \Z$, $H_s(X;\pi_tS^0) \in \cC$ for each bidegree $(s,t)$, hence $\pi_nX$ is also in $\cC$ for all $n\in \Z$. 
\end{proof}

When applied to the weak Serre class $\cC_p$, we obtain a homological characterization of $p$-completeness for bounded below spectra. 

\begin{cor}\label{cor:homcharpcompletion}
For a bounded below spectrum $X$, the following conditions are equivalent:
	\begin{enumerate}
		\item	$X$ is $p$-complete.
		\item $\pi_nX$ is derived $p$-complete for all $n$.
		\item $H_n(X;\Z)$ is derived $p$-complete for all $n$.
	\end{enumerate}
\end{cor}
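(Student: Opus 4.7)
The plan is to derive this corollary as an immediate consequence of the two main inputs already assembled in the paper: the homotopy-theoretic characterization of $p$-completeness for spectra (\Cref{thm:bkpcompletion}(2)), and the generalized Serre-class comparison (\Cref{prop:genserretheory}). Concretely, the equivalence (1) $\Leftrightarrow$ (2) is precisely the content of \Cref{thm:bkpcompletion}(2) under the hypothesis that $X$ is bounded below, so there is nothing new to prove there.

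For the equivalence (2) $\Leftrightarrow$ (3), the strategy is to specialize \Cref{prop:genserretheory} to the subcategory $\cC = \cC_p \subseteq \Mod_{\Z}$ of derived $p$-complete abelian groups. To legitimately invoke \Cref{prop:genserretheory}, one must verify that $\cC_p$ really is a weak Serre class, i.e., a full additive subcategory closed under kernels, cokernels, and extensions. But this is exactly the content of \Cref{prop:pcompletemodules}, which states that $\cC_p$ is a full abelian subcategory of $\Mod_{\Z}$ closed under extensions (and even limits). With this verified, \Cref{prop:genserretheory} applied to $\cC_p$ gives, for any bounded below spectrum $X$, that $\pi_n X \in \cC_p$ for all $n$ if and only if $H_n(X;\Z) \in \cC_p$ for all $n$, which is precisely (2) $\Leftrightarrow$ (3).

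There is no real obstacle here, since the structural work has been done in Sections 2 and 3 above; the corollary is essentially a chaining of \Cref{thm:bkpcompletion}(2), \Cref{prop:pcompletemodules}, and \Cref{prop:genserretheory}. The one bookkeeping point worth recording explicitly is the bounded-below hypothesis: it is needed both to apply the Postnikov-tower half of the argument in \Cref{prop:genserretheory} and to ensure that the second clause of \Cref{thm:bkpcompletion}(2) applies without qualification. Thus the proof reduces to a single sentence assembling these three results.
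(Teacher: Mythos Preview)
Your proposal is correct and follows essentially the same approach as the paper: the paper's proof is a single sentence invoking \Cref{thm:bkpcompletion}(2) for (1)$\Leftrightarrow$(2) and \Cref{prop:genserretheory} for (2)$\Leftrightarrow$(3). Your additional remark that \Cref{prop:pcompletemodules} is needed to verify that $\cC_p$ is a weak Serre class is implicit in the paper (it is stated just before \Cref{prop:genserretheory}) but is a reasonable point to make explicit.
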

\begin{proof}
The equivalence of (1) and (2) is the content of \Cref{thm:bkpcompletion}(2), while (2) is equivalent to (3) by \Cref{prop:genserretheory}.
\end{proof}

We deduce that the integral homology of $p$-complete spaces is well-behaved in the stable range. 

\begin{cor}\label{cor:stablerange}
Suppose $X$ is $p$-complete space. If $X$ is $n$-connected, then $H_k(X;\Z)$ is derived $p$-complete for all $k\le 2n$. 
\end{cor}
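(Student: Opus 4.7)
The plan is to combine Freudenthal's suspension theorem with the homological criterion for $p$-completeness of spectra (\Cref{cor:homcharpcompletion}), applied to a Postnikov truncation of $\Sigma^{\infty}X$. The underlying observation is that while the integral homology of a $p$-complete space can behave badly overall (as emphasized in the introduction), within the stable range $k\le 2n$ the homology of $X$ is captured by a $p$-complete spectrum, to which \Cref{cor:homcharpcompletion} does apply.

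First, I would invoke Freudenthal's suspension theorem: since $X$ is $n$-connected, the unit $\pi_k X \to \pi_k\Sigma^{\infty}X$ is an isomorphism for all $k\le 2n$ (the first suspension is the bottleneck, because every subsequent suspension is at least $(n+1)$-connected and thus has a wider Freudenthal range). By assumption and \Cref{thm:bkpcompletion}(1), each $\pi_k X$ is derived $p$-complete, hence so is $\pi_k\Sigma^{\infty}X$ in the range $k\le 2n$.

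Next, I would set $Y := \tau_{\le 2n}\Sigma^{\infty}X$. All of its homotopy groups are derived $p$-complete (they equal those of $\Sigma^{\infty}X$ for $k\le 2n$, and vanish otherwise), so \Cref{thm:bkpcompletion}(2) shows that $Y$ is $p$-complete. Since $Y$ is bounded below, \Cref{cor:homcharpcompletion} then gives that $H_k(Y;\Z)$ is derived $p$-complete for every $k\in\Z$.

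Finally, the fiber sequence $\tau_{\ge 2n+1}\Sigma^{\infty}X \to \Sigma^{\infty}X \to Y$ has $2n$-connected fiber, so its long exact sequence in integral homology yields $H_k(\Sigma^{\infty}X;\Z) \cong H_k(Y;\Z)$ for all $k\le 2n$; under the paper's convention that $H_*(X;\Z)$ denotes reduced homology, one has $H_k(X;\Z) = H_k(\Sigma^{\infty}X;\Z)$, and the desired conclusion follows. There is no substantial obstacle here—every ingredient has been set up in the preceding sections—but the one point to track carefully is the Freudenthal range, so as to be sure that $\pi_kX \to \pi_k\Sigma^{\infty}X$ is a genuine isomorphism, not merely a surjection, throughout the range $k\le 2n$.
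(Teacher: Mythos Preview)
Your argument is correct and matches the paper's proof essentially line for line: both use Freudenthal to identify $\pi_kX\cong\pi_k\Sigma^{\infty}X$ for $k\le 2n$, pass to the truncation $\tau_{\le 2n}\Sigma^{\infty}X$, apply \Cref{cor:homcharpcompletion}, and then read off $H_k(X;\Z)\cong H_k(\tau_{\le 2n}\Sigma^{\infty}X;\Z)$ in the stated range. The only difference is that you spell out the intermediate appeal to \Cref{thm:bkpcompletion}(2) and the connectivity argument for the fiber more explicitly than the paper does.
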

\begin{proof}
Since $\pi_k\Sigma^{\infty}X \cong \pi_kX$ for $k\le 2n$ by the Freudenthal suspension theorem, \Cref{thm:bkpcompletion} implies that $\pi_*\tau_{\le 2n}\Sigma^{\infty}X$ is derived $p$-complete in each degree, hence so is $H_*(\tau_{\le 2n}\Sigma^{\infty}X;\Z)$ by \Cref{cor:homcharpcompletion}. We thus get that $H_k(X;\Z)  \cong H_k(\Sigma^{\infty}X;\Z) \cong H_k(\tau_{\le 2n}\Sigma^{\infty}X;\Z)$ is derived $p$-complete for $k\le 2n$. 
\end{proof}

\section{The comparison}

In this section, we first study the relation between $p$-completion for spectra and spaces under the infinite loop space functor $\Omega^{\infty}$, and then prove our main theorem. 

\subsection{Infinite loop spaces}

It is easy to deduce from \Cref{thm:bkpcompletion} the following relation between unstable and stable $p$-completion under $\Omega^{\infty}$. 

\begin{prop}
For $0$-connected spectra $X$ and $Y$, we have:
	\begin{enumerate}
		\item $X$ is $p$-complete if and only if $\Omega^{\infty}X$ is $p$-complete.
		\item A map $f\colon X \to Y$ is an $H\F_p$-equivalence if and only if $\Omega^{\infty}f$ is an $H\F_p$-equivalence.
		\item The canonical comparison map $\pc{(\Omega^{\infty}X)} \to \Omega^{\infty}(\pc{X})$ is an equivalence. 
	\end{enumerate}
\end{prop}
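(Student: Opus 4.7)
The plan is to handle the three parts in the order (1), (3), then (2), making repeated use of \Cref{thm:bkpcompletion}. Since $X$ is a $0$-connected spectrum, the space $\Omega^{\infty}X$ is simply-connected and in particular nilpotent, with $\pi_n\Omega^{\infty}X \cong \pi_nX$ for $n \ge 1$ and both trivial for $n \le 0$. For (1), I would simply observe that \Cref{thm:bkpcompletion} characterizes $p$-completeness of $X$ as a spectrum, and of $\Omega^{\infty}X$ as a nilpotent space, by derived $p$-completeness of all homotopy groups; since the zero group is derived $p$-complete, both conditions reduce to the same statement on $\pi_nX$ for $n \ge 1$.

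For (3), part (1) implies that $\Omega^{\infty}(\pc{X})$ is a $p$-complete simply-connected space, so the universal property of unstable $p$-completion factors the unit map $\Omega^{\infty}X \to \Omega^{\infty}(\pc{X})$ uniquely through $\pc{(\Omega^{\infty}X)}$, yielding the canonical comparison map. Since both source and target are simply-connected $p$-complete spaces, by \Cref{thm:bkpcompletion} it suffices to verify the map induces an isomorphism on each $\pi_n$. Applying the natural short exact sequence of \Cref{thm:bkpcompletion} to $\Omega^{\infty}X$ (as a nilpotent space) and to $X$ (as a spectrum) produces, for each $n\ge 1$, matching short exact sequences
\[
0 \to L_0\pi_nX \to \pi_n\pc{(\Omega^{\infty}X)} \to L_1\pi_{n-1}X \to 0
\]
and
\[
0 \to L_0\pi_nX \to \pi_n\Omega^{\infty}(\pc{X}) \to L_1\pi_{n-1}X \to 0
\]
(using $\pi_0X = 0$ to kill the term $L_1\pi_0X$ when $n=1$), and naturality of the construction forces the comparison to induce the identity on the outer terms; the five lemma then yields the claim.

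For (2), an $H\F_p$-equivalence $f$ of $0$-connected (hence bounded-below) spectra is the same datum as $\pc{f}$ being an equivalence, because for bounded-below spectra $H\F_p$-localization coincides with $p$-completion (\Cref{thm:bkpcompletion}(2)). Similarly, $\Omega^{\infty}f$ is an $H\F_p$-equivalence of simply-connected spaces if and only if $\pc{(\Omega^{\infty}f)}$ is an equivalence, by \Cref{thm:bkpcompletion}(1). Since $\pc{X}$ and $\pc{Y}$ remain $0$-connected (by the short exact sequence of \Cref{thm:bkpcompletion} and the vanishing of $\pi_n$ for $n\le 0$), and $\Omega^{\infty}$ is conservative on connective spectra, $\pc{f}$ is an equivalence if and only if $\Omega^{\infty}\pc{f}$ is, and by (3) this is equivalent to $\pc{(\Omega^{\infty}f)}$ being an equivalence, completing the loop.

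No step appears genuinely difficult; the main points requiring attention are the naturality of the short exact sequences in (3) (so that the five lemma applies) and the tracking of connectivity in (2) (so that the universal properties and conservativity used are legitimate).
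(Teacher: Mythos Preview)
Your argument is correct, but it proceeds quite differently from the paper's proof. The paper establishes the three parts in the order (1), (2), (3): part (2) is proven directly by observing that $f$ is an $H\F_p$-equivalence iff $\pi_*\cof(f)$ is uniquely $p$-divisible iff $H_*(\Omega^{\infty}\cof(f);\F_p)=0$, and then running the Serre spectral sequence for the fiber sequence $\Omega^{\infty}X \to \Omega^{\infty}Y \to \Omega^{\infty}\cof(f)$. Part (3) is then deduced from (1) and (2) by noting that in the triangle $\Omega^{\infty}X \to \pc{(\Omega^{\infty}X)} \to \Omega^{\infty}(\pc{X})$ both the horizontal and diagonal maps are $H\F_p$-equivalences.

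You invert the dependence: you prove (3) first via the five lemma on the two short exact sequences of \Cref{thm:bkpcompletion}, and then derive (2) from (3) using conservativity of $\Omega^{\infty}$ on connective spectra and the identification of $H\F_p$-localization with $p$-completion. This is a legitimate and more ``formal'' route that avoids the Serre spectral sequence entirely. The trade-off is that your step (3) leans on the compatibility of the unstable and stable short exact sequences under the comparison map; you correctly flag this. The compatibility on the $L_0$ subgroups follows immediately from the universal property of $L_0$ and the commuting triangle on $\pi_n$, but the induced identification on the $L_1$ quotients is less automatic and deserves a sentence --- e.g., both sequences arise as Milnor $\lim^1$ sequences for the tower $(-)/p^k$, and the comparison map is a map of towers. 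Once that is said, the five-lemma argument is clean. The paper's route sidesteps this bookkeeping at the cost of invoking the Serre spectral sequence.
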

\begin{proof}
Since $\pi_*\Omega^{\infty}X \cong \pi_*X$ and $\Omega^{\infty}X$ is nilpotent, the first claim is a direct consequence of \Cref{thm:bkpcompletion}. In order to prove (2), note that $f$ is an $H\F_p$-equivalence if and only if the homotopy groups $\pi_*\cof(f)$ of the cofiber of $f$ are uniquely $p$-divisible. This is equivalent to the statement that the $\F_p$-homology $H_*(\Omega^{\infty}\cof(f);\F_p)$ is trivial. The Serre spectral sequence associated to the fiber sequence
\[
\xymatrix{\Omega^{\infty}X \ar[r]^-{\Omega^{\infty}f} & \Omega^{\infty}Y \ar[r] & \Omega^{\infty}\cof(f)}
\]
thus shows that this happens if and only if $\Omega^{\infty}f$ is an $H\F_p$-equivalence. 

Statement (1) implies that $\Omega^{\infty}(\pc{X})$ is $p$-complete, so the map $\Omega^{\infty}(X) \to \Omega^{\infty}(\pc{X})$ factors canonically through $\phi\colon\pc{(\Omega^{\infty}X)} \to \Omega^{\infty}(\pc{X})$, making the following diagram commute:
\[
\xymatrix{\Omega^{\infty}X \ar[r] \ar[rd] & \pc{(\Omega^{\infty}X)} \ar[d] \\
& \Omega^{\infty}(\pc{X}).}
\]
By Statement (2), both the horizontal and the diagonal map are $H\F_p$-equivalences, hence so is the vertical comparison map.
\end{proof}

\begin{rem}
Let $\Omega_0^{\infty}$ be the $0$-component of $\Omega^{\infty}$. The last part of the proposition can be strengthened to an equivalence $\pc{(\Omega_0^{\infty}X)} \to \Omega_0^{\infty}(\pc{X})$ for any connective spectrum $X$ such that $\pi_0X$ does not contain any copies of $\Z/p^{\infty}$. To prove this directly, one may use the short exact sequences displayed at the end of \Cref{thm:bkpcompletion}. 
\end{rem}

\subsection{Suspension spectra}

We now turn to the comparison under $\Sigma^{\infty}$. In odd dimensions, the next result has also been observed in \cite[Rem.~VI.5.7]{bousfield_kan}, see also \cite[Rem.~11.1.5]{mayponto}.

\begin{lem}\label{lem:rationalclass}
Let $n\ge 1$ and write $S_p^n$ for the $p$-completion of $S^n$. There exists an uncountable rational vector space in $H_{2n}(S_p^n;\Z)$ which injects into $H_{2n}(K(\Z_p,n);\Z)$ under the  map $S_p^n \to \tau_{\le n}S_p^n \simeq K(\Z_p,n)$. 
\end{lem}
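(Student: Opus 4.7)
The plan is to use the Postnikov fibration $F \to S_p^n \to K(\Z_p,n)$, where $F := \tau_{>n}S_p^n$, to reduce the problem to finding an uncountable $\Q$-vector subspace in $H_{2n}(K(\Z_p,n);\Z)$, together with a transgression argument that automatically produces the required lift to $H_{2n}(S_p^n;\Z)$.

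First I would observe that $F$ is both $p$-complete (as the fiber of a map between $p$-complete spaces) and $n$-connected, so by \Cref{cor:stablerange} the group $H_{2n-1}(F;\Z)$ is derived $p$-complete. A short functoriality argument, using the vanishing $L_0V=0$ for any $\Q$-vector space $V$ together with naturality of the unit $M\to L_0M$, then shows that no derived $p$-complete abelian group admits a nonzero $\Q$-subspace. Turning to the Serre spectral sequence $E^2_{s,t}=H_s(K(\Z_p,n);H_t(F;\Z))\Rightarrow H_{s+t}(S_p^n;\Z)$, the connectivity bounds on $K(\Z_p,n)$ and $F$ together leave the transgression $d_{2n}\colon H_{2n}(K(\Z_p,n);\Z)\to H_{2n-1}(F;\Z)$ as the only possibly nonzero differential affecting $E^r_{2n,0}$. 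Consequently the image of the edge map $H_{2n}(S_p^n;\Z)\to H_{2n}(K(\Z_p,n);\Z)$---which coincides with the Postnikov-truncation map on $H_{2n}$---equals $\ker(d_{2n})$, so every $\Q$-subspace of $H_{2n}(K(\Z_p,n);\Z)$ is automatically in its image. Injectivity of $\Q$-vector spaces as $\Z$-modules then produces the desired lift to $H_{2n}(S_p^n;\Z)$.

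It remains to exhibit an uncountable $\Q$-subspace of $H_{2n}(K(\Z_p,n);\Z)$. For this I would employ the Pontryagin product $\rho\colon\Z_p\otimes_\Z\Z_p=H_n\otimes H_n\to H_{2n}(K(\Z_p,n);\Z)$ coming from the H-space structure on $K(\Z_p,n)$. By graded commutativity it factors through $\Lambda^2_\Z\Z_p$ (for $n$ odd) or $\Sym^2_\Z\Z_p$ (for $n$ even), and comparison with the rationalization $K(\Z_p,n)\to K(\Q_p,n)$---whose degree-$2n$ homology is known by Cartan to be $\Lambda^2_\Q\Q_p$, respectively $\Sym^2_\Q\Q_p$---shows that $\rho$ is injective on any torsion-free subgroup with nontrivial rational image. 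It therefore suffices to find an uncountable $\Q$-subspace of $\Lambda^2_\Z\Z_p$ (and analogously of $\Sym^2_\Z\Z_p$).

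For any transcendental $\pi\in\Z_p$, approximating $\pi=\pi_k+p^ky_k$ with $\pi_k\in\Z$ and $y_k\in\Z_p$ gives $\pi\wedge 1=\pi_k\wedge 1+p^k(y_k\wedge 1)=p^k(y_k\wedge 1)$ in $\Lambda^2_\Z\Z_p$, since $\pi_k\wedge 1=0$ for $\pi_k\in\Z$. Thus $\pi\wedge 1$ is infinitely $p$-divisible, and since $\Lambda^2_\Z\Z_p$ is torsion-free and divisible by every prime $q\ne p$ (as $\Z_p$ is), the subgroup of infinitely $p$-divisible elements is torsion-free and divisible, hence a $\Q$-vector space. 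Picking a transcendence basis $\{\pi_\alpha\}$ of $\Q_p$ over $\Q$---necessarily of cardinality $2^{\aleph_0}$---one checks that $\{\pi_\alpha\wedge 1\}$ is $\Q$-linearly independent in this $\Q$-vector space, establishing uncountability, and an analogous computation using $1\cdot ab-a\cdot b$ for algebraically independent $a,b\in\Z_p$ handles $\Sym^2_\Z\Z_p$. The main obstacle is this last step: rationalization easily shows $H_{2n}(K(\Z_p,n);\Z)\otimes\Q$ is uncountable, but producing a genuine $\Q$-subspace requires the K\"ahler-differential-type construction above, whose essence is the interplay between the density of $\Z\hookrightarrow\Z_p$ and the transcendence of generic $p$-adic integers.
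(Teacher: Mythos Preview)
Your proof is correct and follows the same overall strategy as the paper: reduce via the Serre spectral sequence (equivalently, the Serre long exact sequence) for the fibration $F \to S_p^n \to K(\Z_p,n)$, use \Cref{cor:stablerange} to see that $H_{*}(F;\Z)$ is derived $p$-complete in the relevant degrees so that rational classes in $H_{2n}(K(\Z_p,n);\Z)$ lift, and then exhibit an uncountable $\Q$-subspace of $H_{2n}(K(\Z_p,n);\Z)$ via the Pontryagin product from $\Lambda^2_\Z\Z_p$ or $\Sym^2_\Z\Z_p$.

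The only genuine difference is in how the $\Q$-vector space inside $\Lambda^2_\Z\Z_p$ (resp.\ $\Sym^2_\Z\Z_p$) is produced. You build it by hand: the identity $\pi\wedge 1 = p^k(y_k\wedge 1)$ shows each $\pi\wedge 1$ is infinitely $p$-divisible, and a transcendence basis of $\Q_p/\Q$ then supplies uncountably many $\Q$-linearly independent such elements. The paper (\Cref{lem:surprise}) instead proves the stronger statement that \emph{all} of $\Lambda^2_\Z\Z_p$, and all of $\ker(\Sym^2_\Z\Z_p\to\Z_p)$, is already a $\Q$-vector space, via a one-line base-change argument: since $\Lambda^k$ and $\Sym^k$ commute with reduction mod any prime $l$ and $\Z_p/l$ has rank $\le 1$, these groups are $l$-divisible for every $l$, and torsion-freeness finishes it. Your route is more explicit and makes the K\"ahler-differential flavor visible, but it requires separate bookkeeping for the $\Sym^2$ case (which you only sketch) and does not immediately generalize to $k>2$. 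The paper's argument is shorter and uniform in $k$, which it later needs for \Cref{prop:emspaces}.
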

\begin{proof}
Consider the following segment of the Serre long exact sequence for the fibration $F \to S_p^n \to K(\Z_p,n)$:
\[
\xymatrix{H_{2n}(F;\Z) \ar[r] & H_{2n}(S_p^n;\Z) \ar[r] & H_{2n}(K(\Z_p,n);\Z) \ar[r] & H_{2n-1}(F;\Z) \ar[r] & \ldots.}
\]
\Cref{cor:stablerange} implies that $H_{2n}(F;\Z)$ and $H_{2n-1}(F;\Z)$ are derived $p$-complete. Recalling that $\Hom_{\Z}(\Q,A)=0=\Ext_{\Z}^1(\Q,A)$ whenever $A$ is derived $p$-complete, we see that the natural map  
$\Hom_{\Z}(\Q,H_{2n}(S_p^n;\Z)) \to \Hom_{\Z}(\Q,H_{2n}(K(\Z_p,n);\Z))$ is surjective. 
Thus, it will suffice to show that $H_{2n}(K(\Z_p,n);\Z)$ contains an uncountable rational vector space, which will be verified in the homological proof of \Cref{prop:emspaces} below. 
\end{proof}

Note that, because $H_*(S_p^n; \F_p) \cong H_*(S^n; \F_p) \cong \F_p[n]$, an application of the universal coefficient theorem shows that $H_k(S_p^n; \Z)$ is rational for all $k>n$.

\begin{lem}\label{lem:detection}
Suppose $N$ is a derived $p$-complete nilpotent (abelian) group and $n=1$ ($n\ge 1$). If  $N$ is not bounded $p$-torsion, then there exists an element $x \in N$ of infinite order inducing a monomorphism $H_*(K(\Z_p,n);\Q) \to H_*(K(N,n);\Q)$.
\end{lem}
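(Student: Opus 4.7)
The plan is to find $x\in N$ of infinite order, extend $1\mapsto x$ to a group homomorphism $\phi\colon\Z_p\to N$ using the universal property of derived $p$-completion ($L_0\Z=\Z_p$), and exhibit a retract of $\phi$ after rationalization; applying $K(-,n)$ then gives a retract of the rationalized map on Eilenberg--MacLane spaces, which makes the original map split injective on $\Q$-homology.

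For the choice of $x$: in the abelian case ($n\ge 1$), \Cref{lem:boundedtors} shows $N$ is not torsion, so any non-torsion element works. In the nilpotent case ($n=1$), \Cref{prop:nilboundedtors} gives that $L_0(N/[N,N])$ is not torsion, and the canonical map $N\twoheadrightarrow L_0(N/[N,N])$ (surjective, as noted preceding that proposition) allows us to lift an element $\bar y$ of infinite order to $x\in N$. I claim the composite $\tilde\phi\colon\Z_p\to N\to L_0(N/[N,N])$ (which is $\phi$ itself in the abelian case) is injective: it carries $1$ to $\bar y$, and for any nonzero $a\in\Z_p$ written $a=p^k u$ with $u\in\Z_p^\times$, vanishing of $\tilde\phi(a)=u\cdot p^k\bar y$ in the $\Z_p$-module $L_0(N/[N,N])$ would force $p^k\bar y=0$ by invertibility of $u$, contradicting that $\bar y$ has infinite order. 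In the nilpotent case this also forces the abelianization $\phi^{\mathrm{ab}}\colon\Z_p\to N/[N,N]$ to be injective, as a factor of $\tilde\phi$.

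Tensoring with $\Q$ produces an injection of $\Q$-vector spaces $\Q_p\hookrightarrow (N/[N,N])\otimes\Q$. Choosing a $\Q$-linear splitting and pre-composing with the canonical projection from the Malcev completion $N_\Q\twoheadrightarrow (N/[N,N])\otimes\Q$ yields a group homomorphism $r\colon N_\Q\to\Q_p$ with $r\circ(\phi\otimes\Q)=\id_{\Q_p}$ (in the abelian case, $N_\Q=N\otimes\Q$ and no passage through the abelianization is needed). Applying $K(-,n)$ and using the standard identifications $K(\Z_p,n)_\Q\simeq K(\Q_p,n)$, $K(N,n)_\Q\simeq K(N_\Q,n)$, together with $H_*(X;\Q)\cong H_*(X_\Q;\Q)$ for nilpotent $X$, we conclude that $K(r,n)$ retracts the rationalization of $K(\phi,n)$, so the induced map on rational homology is split injective.

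The main obstacle is ensuring that the abelianized map remains injective after rationalization in the nilpotent case; this is precisely why we track $\bar y$ inside $L_0(N/[N,N])$ rather than $N/[N,N]$ itself, since this is where \Cref{prop:nilboundedtors} combined with the unit/power-of-$p$ argument yields injectivity. Producing the retract itself is then purely formal, using only that injections of $\Q$-vector spaces split and that homomorphisms to abelian groups factor through the abelianization.
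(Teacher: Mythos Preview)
Your proof is correct and follows essentially the same route as the paper: locate an element of infinite order in $L_0(N/[N,N])$ via \Cref{prop:nilboundedtors}, lift it to $x\in N$, argue that the composite $\Z_p\to N\to L_0(N/[N,N])$ is injective, and then split after rationalization to deduce the homology statement. You supply more detail than the paper does---in particular the explicit $a=p^ku$ argument for injectivity and the construction of the retract through the Malcev completion---but the strategy is identical.
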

\begin{proof}
By assumption on $N$ and \Cref{prop:nilboundedtors}, $L_0(N/[N,N])$ contains elements of infinite order. Let $\overline{x}$ be such an element and let $x \in N$ be a lift of $\overline{x}$. For the remainder of the proof we assume $n=1$; the (easier) case $n\ge 2$ and $N$ abelian is proven similarly. The element $x$ induces a map
\[
\xymatrix{K(\Z_p,1) \ar[r] & K(N,1) \ar[r] & K(L_0(N/[N,N]),1)}
\]
such that the composite is injective on $\pi_1$. It follows that the rationalization $K(\Z_p,1)_{\Q} \to K(L_0(N/[N,N]),1)_{\Q}$ of this map is split, hence the composite
\[
\xymatrix{H_*(K(\Z_p,1);\Q) \ar[r] & H_*(K(N,1);\Q) \ar[r] & H_*(K(L_0(N/[N,N]),1);\Q)}
\]
is a split monomorphism, which implies the claim. 
\end{proof}

\begin{prop}\label{prop:dichotomy}
If $X$ is a $p$-complete nilpotent space whose homotopy groups are not all bounded $p$-torsion, then the integral homology groups $H_*(X;\Z)$ and the stable homotopy groups $\pi_*\Sigma^{\infty}X$ both contain an uncountable rational vector space.
\end{prop}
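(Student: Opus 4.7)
The plan is to exhibit an uncountable rational vector space $V \subseteq H_{2n}(X;\Z)$ for a suitable $n$, and then transfer $V$ into $\pi_{2n}\Sigma^\infty X$ via the Atiyah--Hirzebruch spectral sequence.

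Let $n \ge 1$ be minimal such that $\pi_n X$ is not bounded $p$-torsion; since $\pi_n X$ is derived $p$-complete, \Cref{prop:nilboundedtors} produces an element $x \in \pi_n X$ of infinite order. Represent $x$ by a map $S^n \to X$ and, using $p$-completeness of $X$, extend it to $f \colon S_p^n \to X$. Because $S_p^n$ is $(n-1)$-connected, $f$ lifts to $\tilde f \colon S_p^n \to \tau_{\ge n}X$; post-composing with the Postnikov projection $\tau_{\ge n}X \to K(\pi_n X, n)$ yields a map into an $n$-truncated target, which therefore factors through the Postnikov projection $S_p^n \to K(\Z_p, n)$ of the source. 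Since $\pi_n X$ is derived $p$-complete, the target $K(\pi_n X, n)$ is $p$-complete and maps $K(\Z_p, n) \to K(\pi_n X, n)$ are classified by their effect on $\pi_n$, so the resulting comparison agrees with the map from \Cref{lem:detection} induced by $x$.

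Now \Cref{lem:rationalclass} supplies an uncountable rational subspace $V \subseteq H_{2n}(S_p^n;\Z)$ embedding into $H_{2n}(K(\Z_p, n);\Z)$, and \Cref{lem:detection} gives a monomorphism $H_{2n}(K(\Z_p, n);\Q) \to H_{2n}(K(\pi_n X, n);\Q)$; since $V$ is divisible and torsion-free, these combine to embed $V$ into $H_{2n}(K(\pi_n X, n);\Z)$. The commutative square constructed above then forces $\tilde f_*$ to be injective on $V$, placing $V$ inside $H_{2n}(\tau_{\ge n}X;\Z)$. By minimality of $n$, every homotopy group of $\tau_{\le n-1}X$ is bounded $p$-torsion, hence torsion, so the nilpotent fiber sequence $\tau_{\ge n}X \to X \to \tau_{\le n-1}X$ rationalizes to show that $\tau_{\ge n}X \to X$ is a rational equivalence; since $V$ is rational, it embeds in $H_{2n}(X;\Z)$.

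For the stable statement, I would run the Atiyah--Hirzebruch spectral sequence $E^2_{s,t} = H_s(X;\pi_t S^0) \Rightarrow \pi_{s+t}\Sigma^\infty X$. Incoming differentials into $E^r_{2n,0}$ originate in $E^r_{2n+r, 1-r}$, which vanishes since $\pi_t S^0 = 0$ for $t < 0$; outgoing differentials land in subquotients of $H_{2n-r}(X;\pi_{r-1}S^0)$, which are torsion because $\pi_{r-1}S^0$ is finite for $r \ge 2$, so the divisible subspace $V$ survives to $E^\infty_{2n,0}$. Divisibility of $V$ also makes it an injective abelian group, so the extension $0 \to F_{2n-1}\pi_{2n}\Sigma^\infty X \to \pi_{2n}\Sigma^\infty X \to E^\infty_{2n,0} \to 0$ splits over $V$, embedding $V$ into $\pi_{2n}\Sigma^\infty X$. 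The principal technical obstacle is verifying that the two descriptions of $S_p^n \to K(\pi_n X, n)$ truly coincide, which hinges on the identification $[S_p^n, K(\pi_n X, n)] \cong \pi_n X$ coming from derived $p$-completeness of $\pi_n X$; the nilpotence of $X$ is equally essential for the rational equivalence $\tau_{\ge n}X \simeq_\Q X$.
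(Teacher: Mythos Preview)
Your argument for locating the uncountable rational vector space $V$ in $H_{2n}(X;\Z)$ is correct and essentially identical to the paper's: both construct the same commutative square with $S_p^n$, $\tau_{\ge n}X$, and the Eilenberg--MacLane spaces, then invoke \Cref{lem:rationalclass} and \Cref{lem:detection} together with the rational equivalence $\tau_{\ge n}X \to X$.

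There is, however, a genuine gap in your transfer to $\pi_{2n}\Sigma^{\infty}X$. You claim that divisibility (hence injectivity) of $V$ splits the extension
\[
0 \to F_{2n-1}\pi_{2n}\Sigma^{\infty}X \to \pi_{2n}\Sigma^{\infty}X \to E^{\infty}_{2n,0} \to 0
\]
over $V$. But injectivity of $V$ would split an \emph{injection} out of $V$, not a surjection onto $V$; what you need is $\Ext^1_{\Z}(V,F_{2n-1}) = 0$. This does hold, but for a reason you have not stated: each $E^{\infty}_{s,t}$ with $t>0$ contributing to $F_{2n-1}$ is a subquotient of $H_s(X;\pi_tS^0)$, which is annihilated by the (finite) order of $\pi_tS^0$, so $F_{2n-1}$ is an iterated extension of \emph{bounded} torsion groups and hence itself bounded torsion. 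Then $\Ext^1_{\Z}(V,F_{2n-1})$ is simultaneously a $\Q$-vector space (since $V$ is) and annihilated by the exponent of $F_{2n-1}$, so it vanishes and the extension splits over $V$. The paper packages this same idea slightly differently: it shows directly that the Hurewicz map $\pi_*Y \to H_*(Y;\Z)$ has bounded-torsion kernel and cokernel (via the fiber sequence $Y\wedge\tau_{>0}S^0 \to Y \to Y\wedge H\Z$), which forces $V\subseteq\mathrm{im}(h)$ and then lifts $V$ by the same $\Ext^1$ vanishing.
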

\begin{proof}
Assume that $\pi_*X$ is not all bounded $p$-torsion, and let $\pi_nX$ be the lowest such group. It then follows from \Cref{lem:detection} that $\pi_nX$ contains a class $x$ of infinite order inducing a monomorphism $H_*(K(\Z_p,n);\Q) \to H_*(K(\pi_nX,n);\Q)$. 
Since the map $\tau_{\ge n}X \to X$ is a rational homology equivalence, any rational subgroup of $H_*(\tau_{\ge n}X;\Z)$ must map monomorphically to $H_*(X;\Z)$, so it suffices to prove the homological claim for $\tau_{\ge n}X$. 
The element $x$ yields a map $S_p^n \to \tau_{\ge n}X$ such that the composite $S_p^n \to \tau_{\ge n}X \to K(\pi_nX,n)$ factors as 
\[
\xymatrix{\tau_{\ge n}X \ar[r] & \tau_{\le n}\tau_{\ge n}X \simeq K(\pi_nX,n) \\
S_p^n \ar[r] \ar[u] & \tau_{\le n}S_p^n \simeq K(\Z_p,n). \ar[u]}
\]
It follows from \Cref{lem:rationalclass} and the choice of $x$ that the induced homomorphism in homology 
\[
\xymatrix{H_{2n}(S_p^n;\Z) \ar[r] & H_{2n}(\tau_{\ge n}X;\Z) \ar[r] & H_{2n}(K(\pi_nX,n);\Z)}
\]
maps an uncountable rational vector space monomorphically to $H_{2n}(K(\pi_nX,n);\Z)$, hence so does the map $H_{2n}(S_p^n;\Z) \to H_{2n}(\tau_{\ge n}X;\Z)$. This verifies the claim about the integral homology of $X$. 

Recall that, for any connective spectrum $Y$, the Hurewicz map $\pi_*Y \to H_*(Y;\Z)$ has kernel and cokernel of bounded torsion in each degree. Indeed, the fiber sequence $Y\wedge \tau_{>0}S^0 \to Y \to Y \wedge H\Z$ reduces this claim to showing that $\pi_*(Y\wedge \tau_{>0}S^0)$ is bounded torsion in each degree. This follows from the convergent Atiyah--Hirzebruch spectral sequence
\[
H_s(Y;\pi_t\tau_{>0}S^0) \implies \pi_{s+t}(Y\wedge \tau_{>0}S^0),
\]
because $H_s(Y;\pi_t\tau_{>0}S^0)$ is bounded torsion for all $s$ and $t$. Therefore, any rational vector space in $H_*(Y;\Z)$ may be lifted back to $\pi_*Y$. In particular, an uncountable rational vector space in $H_{2n}(X;\Z)$ may be lifted back to $\pi_{2n}(\Sigma^{\infty}X)$ after suspension.
\end{proof}

\begin{rem}
Suppose $X$ is a $p$-complete nilpotent space such that $\pi_nX$ is the lowest homotopy group not of bounded $p$-torsion. The above argument shows that $H_{2n}(X;\Z)$ contains an uncountable rational vector space. With more work, we can also show that $H_k(X;\Z)$ is derived $p$-complete for $k\le 2n-2$ and thus cannot contain any rational classes. Note that when $X$ is $(n-1)$-connected, this follows immediately from \Cref{cor:stablerange} since $H_k(X;\Z)$ is in the stable range. 
\end{rem}

We can now prove our main theorem. 

\begin{thm}\label{thm:char}
If $X$ is a $p$-complete nilpotent space, then $\Sigma^{\infty}X$ is $p$-complete if and only if $\pi_nX$ is bounded $p$-torsion for each $n$. 
\end{thm}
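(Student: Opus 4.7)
The forward implication is the contrapositive of \Cref{prop:dichotomy}: if some $\pi_n X$ fails to be bounded $p$-torsion, then $\pi_{*}\Sigma^{\infty}X$ contains an uncountable rational vector space. Since any homomorphism from $\Q$ into a derived $p$-complete group vanishes (as derived $p$-complete groups contain no nonzero $p$-divisible elements), this prevents $\pi_*\Sigma^{\infty}X$ from being derived $p$-complete, so $\Sigma^{\infty}X$ is not $p$-complete by \Cref{thm:bkpcompletion}(2).

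For the converse, assume $\pi_n X$ is bounded $p$-torsion for every $n$. By \Cref{cor:homcharpcompletion}, it suffices to show that $H_n(X;\Z)$ is derived $p$-complete for each $n$. Using the principal refinement of the Postnikov tower of the nilpotent space $X$, write $X \simeq \holim_k X_k$ where $X_0 \simeq *$ and each stage $X_k \to X_{k-1}$ is a principal fibration with fiber $K(A_k, m_k)$ for some $m_k \ge 1$, with $A_k$ an abelian subquotient of $\pi_{m_k}X$ (coming from an upper central series refinement in the non-simply-connected case). By \Cref{prop:nilboundedtors}, each $A_k$ is bounded $p$-torsion, say annihilated by $p^{\ell_k}$.

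The key observation is that multiplication by $p^{\ell_k}$ is null-homotopic as a self-map of $K(A_k, m_k)$: under the Hurewicz identification
\[
[K(A_k, m_k), K(A_k, m_k)] \cong H^{m_k}(K(A_k, m_k); A_k) \cong \Hom_{\Z}(A_k, A_k),
\]
this map corresponds to $p^{\ell_k}\cdot \id_{A_k} = 0$. Consequently $p^{\ell_k}$ annihilates the reduced integral homology $H_*(K(A_k, m_k);\Z)$, which is therefore bounded $p$-torsion and hence derived $p$-complete.

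Now induct on $k$ to prove that $H_*(X_k;\Z)$ is derived $p$-complete. The Serre spectral sequence
\[
E^2_{s,t} = H_s(X_{k-1}; H_t(K(A_k, m_k);\Z)) \Longrightarrow H_{s+t}(X_k;\Z)
\]
has $E^2$-page in $\cC_p$: since the (possibly twisted) coefficient system $H_t(K(A_k, m_k);\Z)$ is annihilated by $p^{\ell_k}$, the universal coefficient theorem presents each $E^2_{s,t}$ as an extension of $p^{\ell_k}$-torsion groups, hence as a derived $p$-complete group. The spectral sequence converges strongly because the fiber is $(m_k-1)$-connected, giving a bounded filtration of $H_n(X_k;\Z)$ by iterated extensions of derived $p$-complete groups, and thus $H_n(X_k;\Z) \in \cC_p$ by \Cref{prop:pcompletemodules}. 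Since $X \to X_k$ has an increasingly connected fiber as $k \to \infty$, we have $H_n(X;\Z) \cong H_n(X_k;\Z)$ for $k$ sufficiently large, which finishes the proof. The main obstacle is the Serre spectral sequence step: ensuring convergence and $E^2 \subseteq \cC_p$ in the potentially non-simply-connected setting with local coefficients, both of which are tractable precisely because the coefficients are uniformly bounded $p$-torsion.
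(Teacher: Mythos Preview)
Your proof is correct and follows essentially the same route as the paper: both invoke \Cref{prop:dichotomy} for one direction and \Cref{cor:homcharpcompletion} for the other, with your Serre spectral sequence induction being an explicit unpacking of what the paper compresses into a one-line appeal to classical Serre class theory for the class of bounded $p$-torsion abelian groups. Two small points: your assertion that each $E^2_{s,t}$ is $p^{\ell_k}$-torsion is only valid for $t>0$ (the $t=0$ row is handled by the inductive hypothesis on $H_*(X_{k-1};\Z)$, which you should say), and the citation of \Cref{prop:nilboundedtors} is unnecessary since an abelian subquotient of a bounded $p$-torsion group is automatically bounded $p$-torsion.
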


Note that the torsion exponent of $\pi_nX$ may vary with $n$ and does not need to be bounded uniformly for all $n$. 

\begin{proof}
First assume that $X$ is a $p$-complete nilpotent space with $\pi_nX$ of bounded $p$-torsion for each $n$; we can apply \cite[Ch.~II, 4.7]{bousfield_kan} to see that the Postnikov tower of $X$ can be refined to a tower of principal fibrations whose fibers are Eilenberg--MacLane spaces for bounded $p$-torsion abelian groups. 
The category of bounded $p$-torsion abelian groups forms a Serre class, so Serre theory implies that $H_*(X;\Z) \cong H_*(\Sigma^{\infty}X;\Z)$ is degreewise bounded $p$-torsion. Hence, $\Sigma^{\infty}X$ is $p$-complete as a spectrum by \Cref{cor:homcharpcompletion}.

The converse is a consequence of \Cref{prop:dichotomy}: if $\pi_*X$ is not all bounded torsion, then $H_*(\Sigma^{\infty}X;\Z)$ contains rational classes and thus cannot be derived $p$-complete, hence $\Sigma^{\infty}X$ is not $p$-complete by \Cref{cor:homcharpcompletion}.
\end{proof}

\begin{cor}
If $X$ is a nilpotent space with $H_n(X;\Z)$ and $\pi_nX$ derived $p$-complete for all $n$, then $H_n(X;\Z)$ and $\pi_nX$ are bounded $p$-torsion for all $n$. 
\end{cor}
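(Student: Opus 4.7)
The plan is to use \Cref{thm:char} as a black box after verifying its hypotheses. The two assumptions—that $\pi_n X$ and $H_n(X;\Z)$ are derived $p$-complete in every degree—are exactly what is needed to place $X$ into the setting of the main theorem. First, since $X$ is nilpotent with $\pi_n X$ derived $p$-complete for all $n$, \Cref{thm:bkpcompletion}(1) gives that $X$ itself is $p$-complete. Second, the suspension spectrum $\Sigma^{\infty}X$ is connective, and its reduced integral homology coincides with $H_n(X;\Z)$, which is derived $p$-complete by hypothesis. Therefore \Cref{cor:homcharpcompletion} yields that $\Sigma^{\infty}X$ is $p$-complete as a spectrum.

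Now I would invoke the ``only if'' direction of \Cref{thm:char}: since $X$ is a $p$-complete nilpotent space whose suspension spectrum is $p$-complete, each $\pi_n X$ must be bounded $p$-torsion. To transfer this conclusion to integral homology, I would follow the first paragraph of the proof of \Cref{thm:char}: refine the Postnikov tower of $X$ into principal fibrations whose fibers are Eilenberg--MacLane spaces $K(A,m)$ with $A$ bounded $p$-torsion abelian (via \cite[Ch.~II, 4.7]{bousfield_kan}); observe that bounded $p$-torsion abelian groups form a genuine Serre class; then classical Serre theory applied inductively along the tower yields that $H_n(X;\Z)$ is bounded $p$-torsion in each degree.

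There is no genuine obstacle here; the work has already been done in \Cref{cor:homcharpcompletion} and \Cref{thm:char}. The only mildly subtle observation is that the hypothesis on homology enters solely through the homological criterion for $p$-completeness of spectra (\Cref{cor:homcharpcompletion}), since $p$-completeness of the space $X$ itself is already guaranteed by the hypothesis on homotopy groups. In other words, the two hypotheses bolt together to force both $X$ and $\Sigma^{\infty}X$ into the scope of \Cref{thm:char}, after which the conclusions about both $\pi_* X$ and $H_*(X;\Z)$ come out essentially for free.
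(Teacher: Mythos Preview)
Your proof is correct and follows essentially the same route as the paper: use \Cref{thm:bkpcompletion} and \Cref{cor:homcharpcompletion} to verify the hypotheses of \Cref{thm:char}, deduce that $\pi_*X$ is bounded $p$-torsion, and then invoke the Serre-class argument from the proof of \Cref{thm:char} to obtain the same conclusion for $H_*(X;\Z)$.
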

\begin{proof}
The assumption on $\pi_*X$ implies that $X$ is $p$-complete by \Cref{thm:bkpcompletion}, while the assumption on $H_*(X;\Z)$ shows that $\Sigma^{\infty}X$ is $p$-complete, using \Cref{cor:homcharpcompletion}. It thus follows from \Cref{thm:char} that $\pi_*X$ is degreewise bounded $p$-torsion, hence so is $H_*(X;\Z)$ by the proof of \Cref{thm:char}.
\end{proof}

The analogue of this corollary does not hold stably, as the following example demonstrates.

\begin{ex}
Let $M(\Z_p,n)$ be the Moore space for $\Z_p$ in degree $n\ge 2$. As $H_*(\Sigma^{\infty}M(\Z_p,n);\Z)$ is isomorphic to $\Z_p[n]$, we see that $\Sigma^{\infty}M(\Z_p,n)$ is $p$-complete and consequently has derived $p$-complete stable homotopy groups and integral homology groups. However, $H_n(\Sigma^{\infty}M(\Z_p,n);\Z)\cong \Z_p$ is clearly not bounded $p$-torsion. In particular, $M(\Z_p,n)$ is not $p$-complete, so this also shows that the assumption that $X$ be $p$-complete cannot be dropped in \Cref{thm:char}.
\end{ex}

\section{Rational classes in the stable homotopy groups of $K(\Z_p,n)$}

In this section, we present an example that illustrates how the rational classes in the stable homotopy groups of $p$-complete spaces arise. 
In fact, we present two different approaches: One using the integral homology of $K(\Z_p,n)$, and one using Goodwillie calculus. The latter derivation is entirely stable and might be of independent interest. 

First, we need a well-known auxiliary result; we outline a proof because we were unable to find a published reference for it. For an abelian group $A$ and any $k\ge 0$, let $\Sym_{\Z}^{k}(A)$ and $\Lambda_{\Z}^k(A)$ be the $k$th symmetric power and the $k$th exterior power on $A$, respectively. 

\begin{lem}\label{lem:surprise}
If $k>1$, then $\Lambda_{\Z}^k(\Z_p)$ and the kernel of the multiplication map $\Sym_{\Z}^{k}(\Z_p)\to \Z_p$ are uncountable rational vector spaces.
\end{lem}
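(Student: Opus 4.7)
The plan is to exploit the fact that $\Z_p$ fits into a short exact sequence of abelian groups
\[
0 \to \Z_{(p)} \to \Z_p \to V \to 0
\]
with $V = \Z_p/\Z_{(p)}$. A short verification shows that $V$ is torsion-free (since $\Z_{(p)} = \Q \cap \Z_p$ is pure in $\Z_p$) and divisible ($\Z_p$ is $q$-divisible for every prime $q \neq p$, and $p$-divisibility modulo $\Z_{(p)}$ follows from reducing digits mod $p$), so $V$ is a $\Q$-vector space of $\Q$-dimension $2^{\aleph_0}$. Throughout I will repeatedly invoke the fact that extensions of $\Q$-vector spaces by $\Q$-vector spaces split, because $\Q$-vector spaces are divisible and torsion-free, hence $\Z$-injective.

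For $\Lambda^{k}_{\Z}(\Z_p)$, the next step is to apply the standard Koszul-type filtration on exterior powers induced by a short exact sequence of flat modules: $\Lambda^{k}\Z_p$ carries a finite filtration with graded pieces $\Lambda^{i}_{\Z}(\Z_{(p)}) \otimes_{\Z} \Lambda^{k-i}_{\Z}(V)$. Because $\Z_{(p)}$ has $\Z$-rank one, $\Lambda^{i}\Z_{(p)} = 0$ for $i \ge 2$, leaving only $\Lambda^{k}V$ and $\Z_{(p)} \otimes \Lambda^{k-1}V \cong \Lambda^{k-1}V$, both $\Q$-vector spaces. The splitting principle then yields $\Lambda^{k}\Z_p \cong \Lambda^{k}V \oplus \Lambda^{k-1}V$, which is uncountable for $k \ge 2$ since $\dim_{\Q}\Lambda^{k-1}V = 2^{\aleph_0}$.

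For the symmetric case, I note that $\mu\colon \Sym^{k}\Z_p \to \Z_p$ is split by $a \mapsto a \cdot 1^{k-1}$; the image of this section is precisely the filtration piece $F^{k-1} \cong \Z_p$, identified as the image of $\Sym^{k-1}\Z_{(p)} \otimes \Sym^{1}\Z_p \to \Sym^{k}\Z_p$, and $\mu$ restricts to the identity there. Consequently $\ker\mu$ is isomorphic to the quotient $\Sym^{k}\Z_p/F^{k-1}$, which inherits a filtration with graded pieces $\Sym^{k}V, \Sym^{k-1}V, \ldots, \Sym^{2}V$, each a $\Q$-vector space of dimension $2^{\aleph_0}$. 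Iterated extension-splitting gives $\ker\mu \cong \bigoplus_{i=2}^{k} \Sym^{i}V$ as $\Q$-vector spaces, which is thus uncountable. The main point demanding care is verifying the claimed graded pieces of the filtrations on $\Lambda^{k}\Z_p$ and $\Sym^{k}\Z_p$, but this is standard multilinear algebra for a short exact sequence of flat $\Z$-modules ($\Z_{(p)}$, $\Z_p$, and $V$ are all torsion-free); the remainder of the argument is injectivity of $\Q$-vector spaces and cardinal bookkeeping.
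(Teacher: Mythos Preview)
Your argument is correct and takes a genuinely different route from the paper's. The paper proceeds by showing directly that the two groups are torsion-free (because $\Sym_{\Z}^k$ and $\Lambda_{\Z}^k$ preserve torsion-freeness) and divisible: for each prime $l$ one has $\Lambda_{\Z}^k(\Z_p)\otimes\Z/l\cong\Lambda_{\Z/l}^k(\Z/l)=0$ and $\Sym_{\Z}^k(\Z_p)\otimes\Z/l\cong\Sym_{\Z/l}^k(\Z/l)\xrightarrow{\sim}\Z/l$, so both the exterior power and the kernel of $\mu$ are $l$-divisible for every $l$. Having established that they are $\Q$-vector spaces, the paper reads off the dimension after base-change to $\Q$, where $\Q_p$ has $\Q$-dimension $2^{\aleph_0}$. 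By contrast, you build everything from the short exact sequence $0\to\Z_{(p)}\to\Z_p\to V\to 0$ (whose rationality the paper in fact records as a remark \emph{after} the lemma) and the Koszul-type filtrations on $\Lambda^k$ and $\Sym^k$; because $\Z_{(p)}$ has rank one, almost all graded pieces vanish and the survivors are visibly $\Q$-vector spaces built from $V$. The paper's argument is shorter and avoids any filtration bookkeeping, while yours yields the bonus of an explicit decomposition $\Lambda_{\Z}^k(\Z_p)\cong\Lambda_{\Q}^kV\oplus\Lambda_{\Q}^{k-1}V$ and $\ker\mu\cong\bigoplus_{i=2}^k\Sym_{\Q}^iV$, which is more information than the lemma asks for.
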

\begin{proof}
Since both symmetric and exterior power commute with base-change along $\Z \to \Z/l$ for any prime $l$, the indicated maps are isomorphisms mod $l$. Moreover, $\Sym_{\Z}^k(A)$ and $\Lambda_{\Z}^k(A)$ are torsion-free whenever $A$ is, so both $\ker(\Sym_{\Z}^{k}(\Z_p)\to \Z_p)$ and $\Lambda_{\Z}^k(\Z_p)$ are rational vector spaces. We may therefore base-change to $\Q$, where it is easy to verify that the $\Q$-dimension of the groups under consideration is that of $\Q_p$. 
\end{proof}

\begin{rem}
A similar argument also shows that $\Z_p/\Z_{(p)}$ is a rational vector space with the same $\Q$-dimension as $\Q_p$. 
\end{rem}

\begin{prop}\label{prop:emspaces}
For $n \ge 1$ and all $k > 1$, the stable homotopy group $\pi_{nk}\Sigma^{\infty}K(\Z_p,n)$ contains an uncountable rational vector space. In particular, $\Sigma^{\infty}K(\Z_p,n)$ is not $p$-complete.
\end{prop}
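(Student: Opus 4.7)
The plan is to give two proofs---the main Goodwillie-calculus argument and a shorter homological alternative---both producing the uncountable rational vector space from \Cref{lem:surprise} inside $\pi_{nk}\Sigma^{\infty}K(\Z_p,n)$. The ``in particular'' then follows because a derived $p$-complete group contains no nontrivial $\Q$-vector subspace (as $\Hom(\Q,A)=0$ for derived $p$-complete $A$), so \Cref{cor:homcharpcompletion} rules out $p$-completeness of $\Sigma^{\infty}K(\Z_p,n)$.

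For the Goodwillie approach, I would set $Y = H\Z_p[n]$, so $\Omega^{\infty}Y \simeq K(\Z_p,n)$, and study the Goodwillie tower of $\Sigma^{\infty}\Omega^{\infty}\colon\Sp\to\Sp$ at $Y$, whose $k$-th layer is $D_kY=(Y^{\wedge k})_{h\Sigma_k}$. Since $\Z_p$ is flat over $\Z$, iterated Künneth gives $\pi_\ast(H\Z_p)^{\wedge k}=\Z_p^{\otimes_\Z k}$ concentrated in degree $0$, so the homotopy orbit spectral sequence collapses and yields $\pi_{nk}D_kY=(\Z_p^{\otimes k})_{\Sigma_k,\epsilon}$, where $\epsilon$ is the Koszul sign coming from the graded symmetry of the smash product. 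This equals $\Sym^k_\Z(\Z_p)$ for $n$ even and $\Lambda^k_\Z(\Z_p)$ for $n$ odd, both of which contain an uncountable rational subspace $V$ by \Cref{lem:surprise}. I would then run a connectivity-and-torsion analysis of the tower in degrees $nk$ and $nk+1$: for $j>k$ the layer $D_jY$ is too highly connective to contribute; for $j=1$ one has $\pi_{nk}Y=0$ because $k>1$; and for $1<j<k$ the contribution is $H_\ast(\Sigma_j;\Z_p^{\otimes j})$, which is $j!$-torsion because $\Sigma_j$ is finite. The long exact sequence of the fiber sequence $D_kY\to P_k\to P_{k-1}$ thus forces the kernel of $\pi_{nk}D_kY\to\pi_{nk}P_k$ to be bounded torsion, so $V$ injects into $\pi_{nk}P_k$, and a further connectivity estimate at layers $j\ge k+1$ identifies $\pi_{nk}P_k$ with $\pi_{nk}\Sigma^{\infty}\Omega^{\infty}Y$.

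The alternative homological route uses the infinite loop space structure on $K(\Z_p,n)$: the integral Pontryagin product on $H_\ast(K(\Z_p,n);\Z)$ is graded-commutative, and the Hurewicz identification $H_n\cong\Z_p$ extends by iterated product to a natural map $\Sym^k_\Z(\Z_p)\to H_{nk}$ for $n$ even (respectively, $\Lambda^k_\Z(\Z_p)\to H_{nk}$ for $n$ odd). Injectivity on the rational part can be checked by rationalization, since the rationalization agrees with the Pontryagin product on $H_\ast(K(\Q_p,n);\Q)$, which is freely generated by the fundamental class in degree $n$. Combining this with \Cref{lem:surprise} places an uncountable rational vector space inside $H_{nk}(K(\Z_p,n);\Z)$, which then lifts to $\pi_{nk}\Sigma^{\infty}K(\Z_p,n)$ by the stable Hurewicz argument already used in the proof of \Cref{prop:dichotomy}. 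The main obstacle is identical in the two approaches: certifying that the candidate rational subgroup actually injects into the target, rather than only appearing after tensoring with $\Q$. In the Goodwillie route this becomes a bounded-torsion check on neighbouring pieces of the spectral sequence, while in the homological route it reduces to a comparison of Pontryagin rings over $\Z_p$ and $\Q_p$.
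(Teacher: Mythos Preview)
Your two routes line up with the paper's own first and second proofs: the homological argument via the Pontryagin product and the Goodwillie-tower argument via the layers $D_kY\simeq(Y^{\wedge k})_{h\Sigma_k}$, with the rational subspace supplied by \Cref{lem:surprise} in both cases.

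There is one genuine error in your Goodwillie argument, though. The assertion that ``iterated K\"unneth gives $\pi_\ast(H\Z_p)^{\wedge k}=\Z_p^{\otimes_\Z k}$ concentrated in degree $0$'' is false: the smash products are taken over the sphere spectrum, not over $H\Z$, so $\pi_*(H\Z_p\wedge H\Z_p)\cong H_*(H\Z_p;\Z_p)$ sees the integral Steenrod algebra and is nontrivial in positive degrees. Flatness of $\Z_p$ over $\Z$ only buys you $H\Z_p\wedge_{H\Z}H\Z_p\simeq H(\Z_p\otimes_\Z\Z_p)$, which is a different object. Consequently the homotopy orbit spectral sequence does \emph{not} collapse as you claim. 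The paper repairs this by first showing, via the universal coefficient theorem and finite generation of $\pi_*(\Sigma^nH\Z)^{\wedge k}$, that $\pi_t(\Sigma^nH\Z_p)^{\wedge k}$ is \emph{finite} (rather than zero) for $t>nk$; this is enough to conclude that $\pi_*D_k(\Sigma^nH\Z_p)$ is finite above degree $nk$ and that the bottom group $\pi_{nk}D_k$ still contains the coinvariants. Your downstream bounded-torsion analysis of the tower then goes through unchanged, since the extra contributions in the relevant degrees are finite, but you should replace the collapse claim by this finiteness statement.
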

\begin{proof}[First proof]
Let $A$ be an abelian group and recall that $H_*(K(A,n);\Z)$ equipped with the Pontryagin product is a graded commutative algebra such that squares of odd dimensional elements are zero; in fact, it has the structure of a graded divided power algebra, see \cite{em_2,cartan_em} or more recently \cite{richter_divided}. With notation as in the previous lemma, the canonical isomorphism $A \to H_{n}(K(A,n);\Z)$ thus extends to a natural homomorphism
\[
\begin{cases}
\xymatrix{\phi^k(A,n)\colon \Lambda_{\Z}^k(A) \ar[r] & H_{kn}(K(A,n);\Z),} & \text{if } n \text{ odd} \\
\xymatrix{\phi^k(A,n)\colon \Sym_{\Z}^k(A) \ar[r] & H_{kn}(K(A,n);\Z),}& \text{if } n \text{ even}
\end{cases}
\]
for any $n,k>0$. Moreover, we know that $\phi^k(A,n) \otimes_{\Z}\Q$ is a rational isomorphism. It then follows from \Cref{lem:surprise} that, for $k>1$, there exists an uncountable rational vector space which is mapped monomorphically to $H_{kn}(K(\Z_p,n);\Z)$ via $\phi^k(\Z_p,n)$. We thus obtain an uncountable rational vector space in $H_{kn}(K(\Z_p,n);\Z)$ that may be lifted back to give the desired uncountable rational vector space in $\pi_{nk}\Sigma^{\infty}K(\Z_p,n)$ for $k>1$, as in the proof of \Cref{prop:dichotomy}.
\end{proof}

\begin{proof}[Second proof]
We will compute the homotopy groups of $\Sigma^{\infty}K(\Z_p,n) \simeq \Sigma^{\infty}\Omega^{\infty}\Sigma^nH\Z_p$ using Goodwillie calculus~\cite{goodwillie3}. To this end, recall that the Goodwillie tower $(P_k)_{k\ge1}$ associated to the functor $\Sigma^{\infty}\Omega^{\infty}\colon \Sp \to \Sp$ is assembled from fiber sequences of functors
\begin{equation}\label{eq:g1}
\xymatrix{D_k \ar[r] & P_k \ar[r] & P_{k-1}}
\end{equation}
with layers $D_kX \simeq X_{h\Sigma_k}^{\sm k}$, where the homotopy orbits are formed with respect to the permutation action of $\Sigma_k$ (see for example~\cite{kuhnmccarty_omega} and the references given therein). Moreover, the Goodwillie tower $(P_k)_{k\ge0}$ converges for connective spectra, i.e., there is a canonical equivalence
\[
\xymatrix{\Sigma^{\infty}\Omega^{\infty}X \ar[r]^-{\sim} & \lim_kP_kX}
\]
for any connective $X \in \Sp$. We will apply this in the case $X = \Sigma^nH\Z_p$. 

In order to understand the layers, we start by analyzing $\pi_*(\Sigma^nH\Z_p)^{\sm k}$ via the universal coefficient theorem. We claim that, for all $k\ge 1$, the homotopy groups have the following form
\begin{equation}\label{eq:c1}
\pi_*(\Sigma^nH\Z_p)^{\sm k} \cong
\begin{cases}
0 & * < nk \\
\Z_p^{\otimes_{\Z} k} & * = nk \\
\text{finite} & * > nk.
\end{cases}
\end{equation}
By the universal coefficient theorem, we have an isomorphism 
\[
\pi_*(\Sigma^nH\Z_p)^{\sm k} \cong (\pi_*(\Sigma^nH\Z)^{\sm k})\otimes_{\Z} \Z_p^{\otimes k}.
\]
In degrees $*> nk$, the groups $\pi_*(\Sigma^nH\Z)^{\sm k}$ are torsion, so the only torsion-free summand appears in degree $nk$. Since $\pi_*(\Sigma^nH\Z)^{\sm k}$ is finitely generated over $\Z$ in each degree, the claim follows. 

We now plug the formula \eqref{eq:c1} into the convergent homotopy orbit spectral sequence
\[
H_s(\Sigma_k,\pi_t(\Sigma^nH\Z_p)^{\sm k}) \implies \pi_{s+t}D_k(\Sigma^nH\Z_p).
\]
There are two cases: If $t > nk$ or $t<nk$, then the groups $H_s(\Sigma_k,\pi_t(\Sigma^nH\Z_p)^{\sm k})$ are finite or trivial for all $s$, respectively. Let $t=nk$. By \Cref{lem:surprise} and \eqref{eq:c1}, there is an isomorphism $H_s(\Sigma_k,\pi_{nk}(\Sigma^nH\Z_p)^{\sm k}) \cong H_s(\Sigma_k,\Z_p)$ for $s>0$ and $H_0(\Sigma_k,\pi_{nk}(\Sigma^nH\Z_p)^{\sm k})$ contains an uncountable rational vector space $V_k$ if $k>1$. To see the last statement, it suffices to compute the coinvariants on the rational submodule of $\Z_p^{\otimes_{\Z} k}$ by choosing a $\Q$-bases, as in the proof of \Cref{lem:surprise}. Furthermore, since the integral homology of $\Sigma_k$ is finitely generated over $\Z$ in each degree and rationally trivial in positive degrees, $H_s(\Sigma_k,\pi_{nk}(\Sigma^nH\Z_p)^{\sm k})$ is finite for all $s>0$. Combining all this information, we obtain $D_1\Sigma^nH\Z_p\simeq\Sigma^nH\Z_p$ and for $k>1$:
\begin{equation}\label{eq:c2}
\pi_*D_k(\Sigma^nH\Z_p) \cong
\begin{cases}
0 & *<nk \\
V_k \oplus W_k & * = nk \\
\text{finite} & * > nk,
\end{cases}
\end{equation}
where $V_k$ is an uncountable rational vector space and $W_k$ is some abelian group. 

This allows us to derive a structural formula for $\pi_*P_k\Sigma^nH\Z_p$. Consider the following segment of the long exact sequence of homotopy groups associated to the fiber sequence \eqref{eq:g1}:
\[
\xymatrix{\ldots \ar[r] & \pi_{nk+1}P_{k-1}\Sigma^nH\Z_p \ar[r] & \pi_{nk}D_k\Sigma^nH\Z_p \ar[r] &  \pi_{nk}P_k\Sigma^nH\Z_p \ar[r] & \ldots.}
\] 
Because $n\ge 1$, it follows inductively from \eqref{eq:c2} that the term on the left is finite, hence $V_k$ must be a summand in $\pi_{nk}P_k\Sigma^nH\Z_p$. This yields for all $k\ge 1$:
\begin{equation}\label{eq:c3}
\pi_*P_k\Sigma^nH\Z_p \cong
\begin{cases}
0 & *<n \\
V_l \oplus W_l' & * = nl \text{ with } 1\le l \le k \\
\text{finite} & \text{otherwise},
\end{cases}
\end{equation}
where $V_l$ is as above for $l\ge2$, and $V_1$ and $W_l'$ are some abelian groups. 

Finally, since $D_k\Sigma^nH\Z_p$ is $nk$-connective for all $k$, the tower $(\pi_*P_k\Sigma^nH\Z_p)_{k\ge0}$ stabilizes after finally many steps in each degree and hence is Mittag-Leffler. The corresponding Milnor sequence thus degenerates to an isomorphism
\[
\pi_*\Sigma^{\infty}K(\Z_p,n) \cong \pi_*\Sigma^{\infty}\Omega^{\infty}\Sigma^nH\Z_p \cong \lim_k\pi_*P_k\Sigma^nH\Z_p.
\]
Therefore, the claim follows from \eqref{eq:c3}.
\end{proof}

\biblio
\bibliography{duality}\bibliographystyle{alpha}
\end{document}